\providecommand\@dotsep{5}
\def\listtodoname{List of Todos}
\def\listoftodos{\@starttoc{tdo}\listtodoname}
\newcommand{\e}{\varepsilon}
\newcommand{\C}{\mathbb{C}}
\newcommand{\R}{\mathbb{R}}
\newcommand{\RD}{{\mathbb{R}^2}}
\newcommand{\weakto}{\rightharpoonup}
\renewcommand{\le}{\leslant}
\renewcommand{\ge}{\geslant}
\renewcommand{\a }{\alpha }
\renewcommand{\b }{\beta }
\renewcommand{\d }{\delta }
\newcommand{\g }{\gamma }
\renewcommand{\l }{\lambda}
\newcommand{\n }{\nabla }
\newcommand{\s }{\sigma }
\renewcommand{\H}{H^1(\RD)}
\newcommand{\Hr}{H^1_r(\RD)}
\newcommand{\CH}{\mathcal{H}}
\newcommand{\I}{\mathcal{I}}
\newcommand{\N}{\mathbb{N}}
\renewcommand{\C}{\mathbb{C}}
\renewcommand{\o}{\omega}
\def\bbm[#1]{\mbox{\boldmath $#1$}}
\newcommand{\beq }{\begin{equation}}
\newcommand{\eeq }{\end{equation}}
\renewcommand{\le}{\leqslant}
\renewcommand{\ge}{\geqslant}
\newcommand{\dis}{\displaystyle}
\newcommand{\ir}{\int_{-\infty}^{+\infty}}
\newcommand{\ird}{\int_{\R^2}}
\newtheorem{theorem}{Theorem}[section]
\newtheorem{lemma}[theorem]{Lemma}
\newtheorem{proposition}[theorem]{Proposition}
\title[A Gauged Nonlinear Schr\"{o}dinger Equation including a vortex point]{Standing waves for a
 Gauged Nonlinear Schr\"{o}dinger Equation with a vortex point}
\author[Jiang]{Yongsheng Jiang$^1$}
\address{$^1$School of Statistics and Mathematics, Zhongnan University of Economics and Law,  430073 Wuhan, P.R.China}
\author[Pomponio]{Alessio Pomponio$^2$}
\address{$^2$Dipartimento di Meccanica, Matematica e Management, Politecnico di
Bari, Via E. Orabona 4, 70125 Bari, Italy.}
\author[Ruiz]{David Ruiz$^3$}
\address{$^3$Dpto. An\'{a}lisis Matem\'{a}tico, Avda. Fuentenueva s/n, 18071 Granada (Spain).}
\thanks{Y. Jiang is supported by NSFC (11201486).  A.P. is supported by M.I.U.R. - P.R.I.N.
``Metodi variazionali e topologici nello studio di fenomeni non
lineari'', by GNAMPA Project ``Aspetti differenziali e geometrici nello studio
di problemi ellittici quasilineari". D.R. is supported by the Grant MTM2011-26717
and by FQM-116 (J. Andaluc{\'\i}a).}
\email{jiangys@znufe.edu.cn, alessio.pomponio@poliba.it, daruiz@ugr.es}
\date{}
\keywords{Gauged Schr\"{o}dinger Equations, Chern-Simons theory,
Variational methods, concentration compactness.}
\subjclass[2010]{35J20, 35Q55.}
\begin{document}

\begin{abstract}

This paper is motivated by a gauged Schr\"{o}dinger equation in
dimension 2. We are concerned with radial stationary states under
the presence of a vortex at the origin. Those states solve a
nonlinear nonlocal PDE with a variational structure. We will
study the global behavior of that functional, extending known
results for the regular case.
\end{abstract}

\maketitle

\section{Introduction}

In this paper we are concerned with a planar gauged Nonlinear
Schr\"{o}dinger Equation:
\beq \label{planar}
i D_0\phi +
(D_1D_1 +D_2D_2)\phi + |\phi|^{p-1} \phi =0.
\eeq

Here $t \in \R$, $x=(x_1, x_2) \in \R^2$, $\phi :
\R \times \R^2 \to \C$ is the scalar field, $A_\mu : \R\times \R^2
\to \R$ are the components of the gauge potential and $D_\mu =
\partial_\mu + i A_\mu$ is the covariant derivative ($\mu = 0,\
1,\ 2$).

The modified gauge field equation proposes the following equation
for the gauge potential, including the so-called Chern-Simons term
(see \cite{dunne, tar}):

\beq \label{hello} 
\partial_\mu F^{\mu \nu} + \frac 1 2
\kappa \epsilon^{\nu \alpha \beta} F_{\alpha \beta}= j^\nu, \ \mbox{ with } \ F_{\mu \nu}=
\partial_\mu A_\nu - \partial_\nu A_\mu.
\eeq

In the above equation, $\kappa$ is a parameter that
measures the strength of the Chern-Simons term. As usual,
$\epsilon^{\nu \alpha \beta}$ is the Levi-Civita tensor, and
super-indices are related to the Minkowski metric with signature
$(1,-1,-1)$. Finally, $j^\mu$ is the conserved matter current,

$$ j^0= |\phi|^2, \ j^i= 2 {\rm Im} \left (\bar{\phi} D_i \phi \right).$$

At low energies, the Maxwell term in \eqref{hello} becomes
negligible and can be dropped, giving rise to: \beq \label{cs}
\frac 1 2 \kappa \epsilon^{\nu \alpha \beta} F_{\alpha \beta}=
j^\nu. \eeq See \cite{hagen, hagen2, jackiw0, jackiw, jackiw2} for
the discussion above. If we fix $\kappa = 2$, equations
\eqref{planar} and \eqref{cs} lead us to the problem:
 \beq \label{eq:e0}
\begin{array}{l}
i D_0\phi + (D_1D_1 +D_2D_2)\phi + |\phi|^{p-1} \phi =0,\\
\partial_0 A_1  -  \partial_1 A_0  = {\rm Im}( \bar{\phi}D_2\phi), \\
\partial_0 A_2  - \partial_2 A_0 = -{\rm Im}( \bar{\phi}D_1\phi), \\
\partial_1 A_2  -  \partial_2 A_1 =  \frac 1 2 |\phi|^2. \end{array}
\eeq

As usual in Chern-Simons theory, problem \eqref{eq:e0} is
invariant under gauge transformation,
\beq \label{gauge} \phi \to \phi e^{i\chi}, \quad A_\mu \to A_\mu
- \partial_{\mu} \chi, \eeq for any arbitrary $C^\infty$ function
$\chi$.

This model was first proposed and studied in \cite{jackiw0,
jackiw, jackiw2}, and sometimes has received the name of
Chern-Simons-Schr\"{o}dinger equation. The initial value problem,
well-posedness, global existence and blow-up, scattering, etc.
have been addressed in \cite{berge, huh, huh3, tataru, oh} for the
case $p=3$. See also \cite{liu} for a global existence result in
the defocusing case, and \cite{chen-smith} for a uniqueness result to the infinite radial hierarchy.

\

The existence of stationary states for \eqref{eq:e0} and general
$p>1$  has been studied in \cite{byeon} for the regular
case (see also \cite{cunha, huh2, AD, AD2}). Very recently, in \cite{byeon-pre} the case with a vortex point has been considered (with respect
to that paper, our notation interchanges the indices $1$ and
$2$). Consider the ansatz:
\begin{equation*}
\begin{array} {lll}
\phi = u(r) e^{i (N\theta+\omega t)}, && A_0= A_0(r),
\\
A_1=\dis -\frac{x_2}{r^2}h(r), && A_2= \dis \frac{x_1}{r^2}h(r)
\end{array}
\end{equation*}

Here ($r, \ \theta)$ are the polar coordinates of $\R^2$, and $N
\in \N \cup \{0\}$ is the order of the vortex at the origin ($N=0$
corresponds to the regular case).

In \cite{byeon-pre} it is found that $u$ solves the
equation:

 \[ 
 - \Delta u(x) + \o \, u+  \dis
\frac{(h_u(|x|)-N)^2}{|x|^2} u+ A_0(|x|)  u(x)  =|u(x)|^{p-1}u(x),
\quad x \in \RD,
\]
where  \beq \label{hu} h_u(r)= \frac 12\int_0^r s u^2(s) \,
ds,
\eeq and
$$A_0(r)= \xi+ \int_r^{+\infty} \frac{h_u(s)-N}{s} u^2(s)\, ds, \ \ \xi \in \R.$$
The value $\xi$ above appears as an integration constant. Without loss of generality, we can assume $\xi=0$; otherwise it suffices to use the gauge invariance \eqref{gauge} with $\chi = \xi t$.
Then, our problem becomes:

 \beq \label{equation}   - \Delta u(x) + \o \, u+  \dis
\frac{(h_u(|x|)-N)^2}{|x|^2} u+  \left ( \int_{|x|}^{+\infty}
\frac{h_u(s)-N}{s} u^2(s)\, ds \right )  u(x)  =|u(x)|^{p-1}u(x).\eeq

Observe that \eqref{equation} is a nonlocal equation. In
\cite{byeon-pre} it is shown that \eqref{equation} is
indeed the Euler-Lagrange equation of the energy functional $
I_{\o}: \CH \to \R$,
\begin{align*} \label{functional}
I_{\o}(u) & = \dis \frac 12 \int_{\R^2} \left(|\nabla u(x)|^2 + \o
u^2(x) \right) \, dx \nonumber
\\
 & \quad+\dis \frac{1}{2} \int_{\R^2}
\frac{u^2(x)}{|x|^2}\left(h_u(r) -N\right)^2 dx  - \dis
\frac{1}{p+1}  \int_{\R^2} |u(x)|^{p+1} \, dx.
\end{align*}
The Hilbert space $\CH$ is defined as: \beq \label{space}
\CH=\{u\in H^1_r(\R^2):\int_{\R}\frac{u^2(x)}{|x|^2}dx<+\infty\},
\eeq endowed by the norm
\[ 
\|u\|_\CH^2=\int_{\R^2}|\nabla u(x)|^2+\left
(1+\frac{1}{|x|^2}\right )u^2(x)\ dx.
\]

Let us observe that the energy functional $I_{\o}$ presents a
competition between the nonlocal term and the local nonlinearity
of power-type. The study of the behavior of the functional under
this competition is one of the main motivations of this paper. For
$p>3$, it is known that $I_\o$ is unbounded from below, so it
exhibits a mountain-pass geometry (see \cite{byeon, huh2} for the case
$N=0$ and \cite[Section 5]{byeon-pre} for $N \in \N$). In a
certain sense, in this case the local nonlinearity dominates the
nonlocal term. However the existence of a solution is not so
direct, since for $p \in (3,5)$ the (PS) property is not known to
hold. This problem is bypassed by combining the so-called
monotonicity trick of Struwe (\cite{struwe}) with a Pohozaev
identity.

A special case in the above equation is $p=3$: in this case,
solutions have been explicitly found in \cite{byeon, byeon-pre} as
optimizers of a certain inequality. An alternative approach would
be to pass to a self-dual equation, which leads to a Liouville
equation in $\R^2$, singular if $N>0$.

The situation is different if $p\in (1,3)$; here the nonlocal term
prevails over the local nonlinearity, in a certain sense. In
\cite{AD}, the second and third authors studied whether $I_\o$ is
bounded from below or not for $p\in (1,3)$ and $N=0$. The
situation happened to be quite rich and unexpected, and very
different from the usual nonlinear Schr\"{o}dinger equation. Indeed, the boundedness of $I_\o$ for $N=0$ depends on the phase
$\omega$ and the threshold value $\o_0$ is explicit, namely:

\beq \label{w0} \omega_0= \frac{3-p}{3+p}\ 3^{\frac{p-1}{2(3-p)}}\
2^{\frac{2}{3-p}} \left(
\frac{m^2(3+p)}{p-1}\right)^{-\frac{p-1}{2(3-p)}}, \eeq with \beq
\label{defm} m = \ir  \left ( \frac{2}{p+1} \cosh^2 \left
(\frac{p-1}{2} r \right) \right)^{\frac{2}{1-p}} \, dr.\eeq

The purpose of this paper is to extend such result to the case
$N>0$, which is more relevant from the point of view of the
applications. This study has been prompted by Remark 5.1 in
\cite{byeon-pre}.

Our main results are the following:

\begin{theorem} \label{teo1} For $\o_0$ as given in  \eqref{w0}, there holds:

\begin{enumerate}[label=(\roman*), ref=\roman*]
\item \label{en:<} if $\o\in (0,\o_0)$, then $I_\o$ is unbounded
from below;

\item \label{en:=} if $\o=\o_0$, then $I_{\o_0}$ is bounded from
below, not coercive and $\inf I_{\o_0}<0$;

\item  \label{en:>} if $\o>\o_0$, then $I_\o$ is bounded from
below and coercive.
\end{enumerate}
\end{theorem}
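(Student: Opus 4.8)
The plan is to follow the trichotomy scheme developed in \cite{AD} for $N=0$, the genuinely new difficulty being to show that the vortex term does not move the threshold $\o_0$. Working in radial coordinates, I would first record the four competing quantities
\[
A=\int_{\RD}|\n u|^2,\quad B=\int_{\RD}u^2,\quad D=\int_{\RD}\frac{u^2}{|x|^2}(h_u-N)^2,\quad E=\int_{\RD}|u|^{p+1},
\]
so that $I_\o(u)=\tfrac12 A+\tfrac\o2 B+\tfrac12 D-\tfrac1{p+1}E$, and their behaviour under the two–parameter family $u_{t,\l}(x)=t\,u(x/\l)$: one has $A\mapsto t^2A$, $B\mapsto t^2\l^2B$, $E\mapsto t^{p+1}\l^2E$, while $D$ is, up to the $-N$ correction, of order $t^6\l^4$. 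Since $p<3$ the negative term $E$ is dominated by $A$ at fixed mass and can only be balanced against $B$ and $D$; the threshold is therefore governed by a scale–invariant interpolation between $E$ and the triple $(A,B,D)$, and the optimal constant in that interpolation will turn out to be exactly $\o_0$.

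For $\o>\o_0$ (iii) and $\o=\o_0$ (ii) I would establish a sharp Gagliardo–Nirenberg type inequality incorporating the nonlocal term, of the form $E\le K\,\Phi(A,B,D)$ with the homogeneity forced by the scalings above (so that $\Phi$ has $t$–degree $p+1$ and $\l$–degree $2$) and optimal constant $K=K(\o_0)$. Substituting into $I_\o$ gives a lower bound $I_\o(u)\ge g_\o(A,B,D)$ with $g_\o\ge0$ precisely when $\o\ge\o_0$; strict positivity for $\o>\o_0$ yields coercivity, while at $\o=\o_0$ the bound only gives boundedness from below. Non–coercivity and $\inf I_{\o_0}<0$ in (ii) then follow by exhibiting near–optimizers of the inequality: since these spread out towards $r\to\infty$, along them $\|u_n\|_\CH\to\infty$ while $I_{\o_0}(u_n)$ stays bounded, and a single competitor produced by the scaling below gives a negative value.

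The decisive step in (ii)–(iii), and the point I expect to be the main obstacle, is the treatment of $N$. Because $h_u$ is nondecreasing with $h_u(\infty)=\tfrac1{4\pi}B$, in the only regime in which $E$ can beat $\o B$ — namely $B$ large — one has $(h_u-N)^2\ge(1-\eta)\,h_u^2$ outside a bounded set of radii, so the vortex can only improve the estimate and the optimal constant is unchanged. One cannot, however, simply drop $-N$ pointwise, since $(h_u-N)^2$ vanishes on the radius where $h_u=N$; a careful splitting of the radial integral is needed to show that this bounded ``defect'' region contributes only a lower–order error, so that the sharp constant of the inequality remains $\o_0$ for every $N$.

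For $\o\in(0,\o_0)$ (i) I would realize the sharp scaling by explicit test functions. Take the one–dimensional soliton $v(s)=\bigl(\tfrac{2}{p+1}\cosh^2(\tfrac{p-1}{2}s)\bigr)^{1/(1-p)}$ that defines $m$ in \eqref{defm} (and hence $\o_0$ in \eqref{w0}), set $u_R(r)=\alpha\,v\bigl((r-R)/\beta\bigr)$ and let $R\to\infty$. The measure $2\pi r\,dr\approx2\pi R\,ds$ makes $A,B,E$ and the nonlocal contribution all grow like $R$, while near the bump $h_u(r)\to\tfrac{R}{2}\alpha^2\beta\,m\to\infty$, so $(h_u-N)^2=h_u^2\,(1+o(1))$ and the vortex drops out at leading order. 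Thus $I_\o(u_R)=\pi R\,[\,J_\o(\alpha,\beta;v)+o(1)\,]$, where $J_\o$ is exactly the bracket arising for $N=0$ (including its nonlocal part $\propto\int v^2 H^2$, $H(s)=\int_{-\infty}^s v^2$). A short optimization over the amplitude $\alpha$ and width $\beta$, with the soliton $v$ as extremal profile, shows $\min J_\o<0$ iff $\o<\o_0$ and $\min J_{\o_0}=0$; hence $I_\o(u_R)\to-\infty$ for $\o<\o_0$, and the same family at $\o=\o_0$ supplies the non–coercive, bounded sequence and the negative competitor needed in (ii).
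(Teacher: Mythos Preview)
Your construction for (i) via translated one-dimensional profiles is correct and is exactly the mechanism the paper uses (Lemma~\ref{le:asin}): $I_\o(U_\rho)=2\pi\rho\,J_\o(U)-C+o_\rho(1)$, and $\min J_\o<0$ for $\o<\o_0$ does the job. The same asymptotics also give non-coercivity at $\o_0$.

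The genuine gap is in your plan for the lower bound in (ii)--(iii). The threshold $\o_0$ is \emph{not} a scaling threshold but a translation-to-infinity one: along the two-parameter family $u_{t,\l}(x)=t\,u(x/\l)$ one checks (using the homogeneities you wrote down, already for $N=0$) that $I_\o(u_{t,\l})$ is bounded below for \emph{every} $\o>0$, because the negative term has $t$-degree $p+1<4$ and $\l$-degree $2$, strictly below the envelope of the positive terms. Hence a scaling-homogeneous Gagliardo--Nirenberg inequality $E\le K\,\Phi(A,B,D)$ cannot have $\o_0$ as its critical constant, and your scheme ``substitute and get $I_\o\ge g_\o(A,B,D)$ with $g_\o\ge 0$ iff $\o\ge\o_0$'' cannot be right; in fact it would force $\inf I_{\o_0}\ge 0$, contradicting the very statement (ii) you are trying to prove. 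Likewise, ``strict positivity of $g_\o$ yields coercivity'' is not a valid inference.

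What the paper does instead is an indirect, concentration-compactness type argument that is not reducible to a single interpolation inequality. One takes a sequence $u_n$ with $\|u_n\|\to\infty$ but $I_\o(u_n)$ bounded above (in fact minimizers of $I_\o$ on balls $B(0,n)$) and shows, via the key structural Proposition~\ref{prop-fund} (which uses inequality~\eqref{ineq} only to control $\int|u|^4$, not $\int|u|^{p+1}$), that the mass of $u_n$ lives in an annulus at radius $\sim\|u_n\|^2$. A carefully placed cut-off $\psi_n$ then splits
\[
I_\o(u_n)\ \ge\ 2\pi\,\xi_n\,J_\o(u_n\psi_n)\ +\ I_\o\big(u_n(1-\psi_n)\big)\ +\ c\|u_n(1-\psi_n)\|_{L^2}^2\ +\ O(\|u_n\|),
\]
so that for $\o>\o_0$ one invokes $J_\o\ge c>0$ on nonvanishing sequences (Proposition~\ref{extra}) to force a contradiction, and for $\o=\o_0$ one first identifies the outer piece with a translate of $w_{k_2}$, upgrades the cut-off using the resulting exponential decay, and reduces to the already-proved case $\o>\o_0$. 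Coercivity for $\o>\o_0$ is then obtained by a separate monotonicity-in-$\o$ trick (end of Step~5). The $N$-dependent terms enter only as controllable lower-order corrections in Lemma~\ref{le:asin}, Proposition~\ref{prop-fund} and the splitting estimates; there is no ``sharp constant unchanged'' statement to prove, which is why your proposed defect-region analysis is neither needed nor sufficient.
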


Regarding the existence of solutions, we obtain the following
results:

\begin{theorem} \label{teo2} There exist $\bar{\omega} > \tilde{\o}>\o_0$ such that:

\begin{enumerate}[label=(\roman*), ref=\roman*]
\item if $\o> \bar{\o}$, then \eqref{equation} has no solutions
different from zero;
\item if $\o\in (\o_0,\tilde{\o})$, then
\eqref{equation} admits at least two positive solutions: one of
them is a global minimizer for $I_\o$ and the other is a
mountain-pass solution;
\item for almost every $\o\in (0,\o_0)$
\eqref{equation} admits a positive solution.
\end{enumerate}
\end{theorem}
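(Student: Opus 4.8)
The plan is to treat the three assertions with three different variational tools, all resting on two integral identities satisfied by any radial solution $u$ of \eqref{equation}. Testing the equation against $u$ gives the Nehari identity
\[
\|\nabla u\|_2^2 + \omega\|u\|_2^2 + \int_{\R^2}\frac{(h_u-N)^2}{|x|^2}u^2\,dx + \int_{\R^2} A_0\, u^2\,dx = \|u\|_{p+1}^{p+1},
\]
where $A_0(|x|)=\int_{|x|}^{+\infty}\frac{h_u(s)-N}{s}u^2(s)\,ds$; an integration by parts (using $r u^2\,dr = 2\,dh_u$) turns the nonlocal term into $\int A_0 u^2 = 2\int \frac{h_u(h_u-N)}{|x|^2}u^2$. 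A Pohozaev identity is obtained from the dilation $u_t(x)=u(x/t)$: since $h_{u_t}(r)=t^2 h_u(r/t)$, differentiating $I_\omega(u_t)$ at $t=1$ gives $\omega\|u\|_2^2 + 2\int\frac{h_u(h_u-N)}{|x|^2}u^2 = \frac{2}{p+1}\|u\|_{p+1}^{p+1}$. Subtracting, the two identities yield the clean relation $\|\nabla u\|_2^2 + \int\frac{(h_u-N)^2}{|x|^2}u^2 = \frac{p-1}{p+1}\|u\|_{p+1}^{p+1}$. It is convenient to record that the $L^2$-invariant scaling $u^t = t^{-1}u(\cdot/t)$ leaves $h_u$ covariant, which both simplifies these computations and guides the compactness analysis below.

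For (i), I would combine the previous relation with the Gagliardo--Nirenberg inequality $\|u\|_{p+1}^{p+1}\le C\|\nabla u\|_2^{p-1}\|u\|_2^2$ in $\R^2$. Since $3-p>0$, this forces $\|\nabla u\|_2^{3-p}\le C'\|u\|_2^2$, bounding $\|\nabla u\|_2$ and hence $\|u\|_{p+1}^{p+1}$ by a power of $\|u\|_2^2$. Inserting these bounds into the Pohozaev identity and estimating the sign-indefinite nonlocal term from below through $h_u(h_u-N)\ge -N^2/4$, I would arrive at an inequality of the form $\omega\|u\|_2^2\le F(\|u\|_2)$ with $F$ growing slower than linearly in $\|u\|_2^2$; this is incompatible with a nontrivial $u$ once $\omega$ is large, and defines the threshold $\bar\omega$. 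Controlling the indefinite term $\int A_0 u^2$ is the delicate point here.

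For (ii), Theorem \ref{teo1}\eqref{en:>} gives that $I_\omega$ is coercive and bounded below when $\omega>\omega_0$. I would first produce a global minimizer by the direct method: a minimizing sequence is bounded by coercivity, and along a weakly convergent subsequence the quadratic and gauge terms are weakly lower semicontinuous while the radial compact embedding $H^1_r(\R^2)\hookrightarrow L^{p+1}(\R^2)$, together with a weak-continuity lemma for the nonlocal term, passes the nonlinearity to the limit; hence the infimum is attained. Setting $\tilde{\omega} := \sup\{\omega>\omega_0 : \inf I_\omega<0\}$, which satisfies $\tilde{\omega}>\omega_0$ because $\inf I_{\omega_0}<0$ by Theorem \ref{teo1}\eqref{en:=} and $\omega\mapsto\inf I_\omega$ is continuous and nondecreasing, the minimizer is nontrivial for $\omega\in(\omega_0,\tilde{\omega})$ and provides the first solution. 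Since $0$ is a strict local minimum (for small $u$ the gauge term $\tfrac12\int\frac{(h_u-N)^2}{|x|^2}u^2\sim\tfrac{N^2}{2}\int\frac{u^2}{|x|^2}$ dominates and $I_\omega(u)>0$), while the minimizer has negative energy, there is a mountain-pass geometry; coercivity makes every Palais--Smale sequence bounded, and the compactness lemma yields the Palais--Smale condition, so the mountain-pass theorem gives a second solution at a strictly positive level, distinct from both $0$ and the minimizer. Both solutions may be taken positive by replacing $u$ with $|u|$ (the functional and $h_u$ depend only on $u^2$) and invoking the maximum principle. Finally $\bar\omega>\tilde{\omega}>\omega_0$ follows since nontrivial solutions exist on $(\omega_0,\tilde{\omega})$.

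For (iii), and this is where I expect the main difficulty, $\omega\in(0,\omega_0)$ makes $I_\omega$ unbounded below by Theorem \ref{teo1}\eqref{en:<}, so minimization fails, but $0$ remains a strict local minimum and the unboundedness supplies points of negative energy; thus $I_\omega$ has a mountain-pass geometry with level $c(\omega)>0$. The absence of coercivity means the Palais--Smale condition is not available, and the nonlinearity is too weak ($p<3$) to recover boundedness directly. I would therefore use the monotonicity trick: since $I_\omega$ is pointwise nondecreasing in $\omega$, so is $c(\omega)$, hence $c$ is differentiable for almost every $\omega$, and at each such $\omega$ a comparison of nearby mountain-pass levels produces a bounded Palais--Smale sequence at level $c(\omega)$. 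The hard part is then to upgrade such a bounded sequence to a genuine nonzero solution: one must prove a compactness lemma showing that the nonlocal gauge term is sequentially weakly continuous on bounded subsets of $\CH$ (exploiting the radial compact embeddings and the monotone structure of $h_u$), and use the Pohozaev identity derived above to rule out vanishing, so that the weak limit is a nontrivial critical point. Positivity is obtained as in (ii), yielding a positive solution for almost every $\omega\in(0,\omega_0)$.
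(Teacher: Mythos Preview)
Your treatment of parts (ii) and (iii) is essentially correct and close to the paper's approach. For (ii) the paper argues identically. For (iii) the paper applies the monotonicity trick after a rescaling $u\mapsto\sqrt{\omega}\,u(\sqrt{\omega}\,\cdot)$ that recasts the problem as $\mathcal{I}_\lambda=\Phi-\frac{\lambda}{p+1}\|u\|_{p+1}^{p+1}$ with $\Phi$ coercive on $\CH$, so that Jeanjean's theorem applies verbatim in $\lambda$; your direct variation in $\omega$ works as well, since the monotonicity trick produces an $L^2$ bound which, combined with the level bound, the nonnegativity of the gauge term, Gagliardo--Nirenberg, and Lemma~\ref{lemmino}, yields boundedness in $\CH$. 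In both (ii) and (iii) positivity is obtained, as in the paper, by working with the truncated functional and invoking the maximum principle.

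Part (i), however, has a genuine gap. Your pointwise bound $h_u(h_u-N)\ge-\tfrac{N^2}{4}$ leaves you with the singular Hardy term $\int_{\R^2}\frac{u^2}{|x|^2}\,dx$, and this is \emph{not} controlled by any power of $\|u\|_2$ or $\|\nabla u\|_2$: the Hardy inequality fails in dimension two, and that integral is precisely the extra piece of the $\CH$-norm that does not embed into $H^1_r(\R^2)$. So the claimed inequality $\omega\|u\|_2^2\le F(\|u\|_2)$ does not follow, and Gagliardo--Nirenberg alone is insufficient. The paper takes a different route. It combines the Nehari and Pohozaev identities with a free parameter $l>0$ and uses the special inequality~\eqref{ineq} (not Gagliardo--Nirenberg) to convert $l$ units of $\|\nabla u\|_2^2+\int\frac{(h_u-N)^2}{|x|^2}u^2$ into $\frac{l}{2}\|u\|_4^4$. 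The indefinite piece becomes $\int\frac{u^2}{|x|^2}(h_u-N)(3h_u-N)$, which is negative only on the annulus $\{N/3\le h_u\le N\}$; there $h_u$ is bounded below by $N/3$, and since $4\pi h_u(r)\le Cr\|u\|_{L^4(B_r)}^2$, one gets
\[
\int_{\{N/3\le h_u\le N\}}\frac{u^2}{|x|^2}\,dx\;\le\;C\int_{\R^2}h_u^2\,\frac{u^2}{|x|^2}\,dx\;\le\;C\,\|u\|_4^4.
\]
Choosing $l$ large absorbs this into the $L^4$ term, reducing the conclusion to the pointwise statement that $\omega s^2+s^4-C|s|^{p+1}\ge 0$ for all $s\ge 0$; since $p+1\in(2,4)$, this holds for all sufficiently large $\omega$ and forces $u\equiv 0$.
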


The proofs follow the same ideas as in \cite{AD}, and is related
to a natural limit problem. Roughly speaking, this limit problem
stems from the behavior of the map $ \rho \mapsto I_{\o}(u(\cdot-
\rho))$ as $\rho \to +\infty$, and this does not depend on $N$.
However, in our proofs the analysis made in Proposition
\ref{prop-fund} must be re-elaborated with respect to that of
\cite{AD}, and the new terms need new estimates in the asymptotic
expansions that follow afterwards. Moreover, the non-existence
result of Theorem \ref{teo2} is immediate for $N=0$ but its proof
becomes delicate for $N>0$. Finally, the case $N>0$ is more
relevant from the point of view of the Physics model, since it
includes a vortex at the origin. One of the main features of the
Chern-Simons theory is the appearance of vortices in the model,
see \cite{dunne, tar, yang}).

The rest of the paper is organized as follows. Section 2 is
devoted to some notations and preliminary results. In Section 3 we
prove Theorems \ref{teo1} and \ref{teo2}.

\section{Preliminaries}

Let us first fix some notations. We denote by $H_r^1(\R^2)$ the
Sobolev space of radially symmetric functions, and $\| \cdot \|$
its usual norm. We denote by $\|u\|_{L^p}$ the usual Lebesgue norm
in $\R^2$. Moreover, we will write $\|\cdot\|_{H^1(\R)}$,
$\|\cdot\|_{H^1(a,b)}$ to indicate the norms of the Sobolev spaces
of dimension $1$.

However our functional $I_\o$ is defined in the space $\CH$, defined
in \eqref{space}. Its norm will be denoted by $\| \cdot \|_\CH$. In
\cite[Proposition 3.1]{byeon-pre} it is shown that $$\CH \subset \{
u \in C(\R^2): u(0)=0\} \cap L^{\infty}(\R^2).$$

If nothing is specified, strong and weak convergence of sequences
of functions are assumed in the space $\H$.

In our estimates, we will frequently denote by $C>0$, $c>0$ fixed
constants, that may change from line to line, but are always
independent of the variable under consideration. We also use the
notations $O(1), o(1), O(\e), o(\e)$ to describe the asymptotic
behaviors of quantities in a standard way. Finally the letters
$x$, $y$ indicate two-dimensional variables and $r$, $s$ denote
one-dimensional variables.

Let us start with the following proposition, proved in
\cite{byeon, byeon-pre}:

\begin{proposition} $I_\o$ is a $C^1$ functional, and its critical
points correspond to classical solutions of \eqref{equation}.
\end{proposition}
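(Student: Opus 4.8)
The plan is to prove that $I_\o$ is $C^1$ on $\CH$ and that its critical points are classical solutions of \eqref{equation}, by treating the three terms of the functional separately and then invoking elliptic regularity. First I would recall the standard fact that the quadratic part $u \mapsto \frac12 \int_{\R^2} (|\nabla u|^2 + \o u^2)$ and the local nonlinearity $u \mapsto \frac{1}{p+1}\int_{\R^2}|u|^{p+1}$ are both $C^1$ on $\CH$; the former is obvious since it is a continuous quadratic form in the $\CH$-norm, and the latter follows from the embedding $\CH \hookrightarrow L^{p+1}(\R^2)$ (which holds because $\CH \subset H^1_r(\R^2) \cap L^\infty(\R^2)$, so every $u \in \CH$ lies in $L^q(\R^2)$ for all $q \in [2,+\infty]$). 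The derivative of the nonlinear term in a direction $v$ is the familiar $\int_{\R^2}|u|^{p-1}u\, v$.

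The heart of the matter is the nonlocal term
\[
G(u) = \frac{1}{2}\int_{\R^2} \frac{u^2(x)}{|x|^2}\bigl(h_u(r)-N\bigr)^2\, dx,
\]
where $h_u(r) = \frac12 \int_0^r s\, u^2(s)\, ds$ depends quadratically and nonlocally on $u$. The plan is to show $G \in C^1(\CH)$ by first establishing that $u \mapsto h_u$ is a smooth map and that $G$ is well-defined and continuous, using that $h_u(r)$ is uniformly bounded by $\frac14 \|u\|_{L^2}^2$ and that the weight $u^2/|x|^2$ is controlled by the $\CH$-norm. I would compute the Gâteaux derivative directly: differentiating $G(u+tv)$ in $t$ at $t=0$ produces two contributions, one from varying the explicit factor $u^2$ and one from varying $h_u$ inside $(h_u-N)^2$. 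The second contribution, after differentiating $h_u$ in the direction $v$ and interchanging the order of integration (a Fubini argument), is what generates the nonlocal integral $\bigl(\int_{|x|}^{+\infty}\frac{h_u(s)-N}{s}u^2(s)\,ds\bigr)u$ appearing in \eqref{equation}. The main obstacle is to justify the differentiation under the integral sign and the Fubini exchange rigorously, and then to prove that the resulting Gâteaux derivative is continuous from $\CH$ to its dual; this requires careful estimates showing that each nonlocal factor is bounded in terms of $\|u\|_\CH$ and depends continuously on $u$, so that Gâteaux plus continuity upgrades to Fréchet differentiability.

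Once $I_\o \in C^1(\CH)$ is established, a critical point $u$ satisfies $I_\o'(u)[v]=0$ for all $v \in \CH$, which is precisely the weak formulation of \eqref{equation}. To pass from weak to classical solutions, I would argue by regularity: since $u \in \CH \subset L^\infty(\R^2)$, all the coefficients in \eqref{equation}, namely $\o$, the potential $(h_u(|x|)-N)^2/|x|^2$, and the nonlocal term $A_0(|x|) = \int_{|x|}^{+\infty}\frac{h_u(s)-N}{s}u^2(s)\,ds$, are bounded and continuous away from the origin (and one checks $h_u(r)/r^2$ stays bounded near $0$ because $h_u(r)=O(r^2)$). Writing the equation as $-\Delta u = f(x)$ with $f \in L^\infty_{loc}$, standard elliptic $L^q$ and Schauder estimates give $u \in W^{2,q}_{loc}$ for every $q$, hence $u \in C^{1,\alpha}_{loc}$; bootstrapping with the (Hölder) regularity of the coefficients then yields $u \in C^2$, so $u$ is a classical solution. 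Since the detailed verification follows the arguments already carried out in \cite{byeon, byeon-pre}, I would keep this last part brief and refer to those works for the analogous computations.
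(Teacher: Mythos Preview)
The paper does not actually prove this proposition: it is stated with the remark ``proved in \cite{byeon, byeon-pre}'' and no argument is given. Your outline is therefore more detailed than the paper's own treatment, and the strategy you sketch---splitting off the quadratic and local parts, computing the G\^ateaux derivative of the nonlocal term and using Fubini to produce the $\int_{|x|}^{+\infty}\frac{h_u(s)-N}{s}u^2(s)\,ds$ factor, then bootstrapping regularity---is exactly the approach carried out in those references, so in spirit you and the paper agree.

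One small slip worth flagging: in your regularity discussion you write that the potential $(h_u(|x|)-N)^2/|x|^2$ is handled near the origin because ``$h_u(r)/r^2$ stays bounded.'' That controls the $h_u^2/r^2$ and $h_u/r^2$ pieces, but the remaining term $N^2/|x|^2$ is genuinely singular at $0$ when $N>0$, so the coefficient is \emph{not} bounded there. Classical regularity at the origin in the vortex case requires an extra argument exploiting that $u(0)=0$ (indeed $u$ vanishes to some order at the origin), and this is precisely one of the points treated in \cite{byeon-pre}. Since you already defer the details to that reference, the overall plan is fine, but the parenthetical remark as written is misleading.
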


The next result is contained in \cite[Proposition 3.4]{byeon-pre},
and deals with the behavior of $I_{\o}$ under weak limits.

\begin{proposition} \label{prop:weak} Recalling the definition of
$h_u$, \eqref{hu}, let us define: \beq \label{K} K(u)=\frac 1 2
\int_{\RD} h_u^2(x) \frac{u^2(x)}{|x|^2} - 2 N
h_u(x)\frac{u^2(x)}{|x|^2}\, dx. \eeq Then $K$ and $K'$ are weakly
continuous in $\CH$. As a consequence,  $I_\o$ is weak lower
semicontinuous, and $I_{\o}'$ is weakly continuous in $\CH$.
\end{proposition}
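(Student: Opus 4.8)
The plan is to deduce the weak continuity of $K$ and $K'$ from the compactness of radial Sobolev embeddings together with the monotone, uniformly bounded structure of $h_u$; the stated consequences for $I_\o$ then follow by a routine term-by-term splitting. Throughout, fix $u_n\weakto u$ in $\CH$. First I would record the standard consequences of weak convergence: $\{u_n\}$ is bounded in $\CH$, hence in $\Hr$, and the embeddings $\CH\subset L^\infty(\R^2)$ and $\Hr\hookrightarrow L^2(\R^2)$ give uniform bounds $\|u_n\|_\infty\le C$ and $\|u_n\|_{L^2}\le C$. By the compact Strauss embedding $\Hr\hookrightarrow L^q(\R^2)$ for every $q\in(2,\infty)$, we have $u_n\to u$ strongly in $L^q$ and a.e. (up to a subsequence), hence $u_n\to u$ in $L^2_{loc}$. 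Consequently $h_{u_n}(r)=\tfrac1{4\pi}\int_{|x|\le r}u_n^2\,dx\to h_u(r)$ for each $r$, while the uniform bounds give $h_{u_n}(r)\le\tfrac14\|u_n\|_\infty^2 r^2\le C r^2$ and $h_{u_n}(r)\le h_{u_n}(+\infty)\le C$.

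For $K$ itself the key idea is to rewrite it purely in terms of $h_u$, so that only the pointwise convergence of $h_{u_n}$ is needed. Passing to radial coordinates and using $\tfrac12 r u^2\,dr=dh_u$ together with $(h_u^2-2Nh_u)\,dh_u=d\left(\tfrac13 h_u^3-Nh_u^2\right)$, a Stieltjes integration by parts (the boundary terms vanish because $u(0)=0$ forces $h_u(r)=o(r^2)$ as $r\to0$, and $r^{-2}\to0$ as $r\to+\infty$) gives
\[
K(u)=4\pi\int_0^{+\infty}\frac{1}{r^3}\left(\frac{h_u^3(r)}{3}-N\,h_u^2(r)\right)dr.
\]
The integrand converges pointwise along $u_n$, and the two uniform bounds above furnish a dominating function $G(r)$, comparable to $C(r^3+r)$ near $0$ and to $C\,r^{-3}$ near infinity, which lies in $L^1(0,+\infty)$ independently of $n$. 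Dominated convergence then yields $K(u_n)\to K(u)$, which is the weak continuity of $K$.

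For $K'$ I would first compute, for fixed $v\in\CH$,
\[
\langle K'(u),v\rangle=\int_{\R^2}(h_u-N)\,\dot h_u\,\frac{u^2}{|x|^2}\,dx+\int_{\R^2}(h_u^2-2Nh_u)\,\frac{u v}{|x|^2}\,dx,\qquad \dot h_u(r)=\int_0^r s\,u\,v\,ds,
\]
and prove $\langle K'(u_n),v\rangle\to\langle K'(u),v\rangle$ for each fixed $v$. The second (local) integral is handled by noting that $w\mapsto w/|x|$ is bounded linear from $\CH$ into $L^2$, so $u_n/|x|\weakto u/|x|$ in $L^2$; pairing against the fixed $L^2$ function $v/|x|$ weighted by $h_u^2-2Nh_u$, and absorbing the coefficient difference $h_{u_n}^2-2Nh_{u_n}\to h_u^2-2Nh_u$ by dominated convergence, gives the limit. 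For the first (nonlocal) integral I would again pass to radial coordinates, use $(h_u-N)\,dh_u=d\left(\tfrac12 h_u^2-Nh_u\right)$, and integrate by parts to trade the bare singular weight $|x|^{-2}$ for integrals of $h_u$ and $\dot h_u$; the uniform estimates $|\dot h_{u_n}(r)|\le C r^2$ near $0$ (from the $L^\infty$ bounds on $u_n$ and $v$) and $|\dot h_{u_n}(r)|\le C$ (Cauchy--Schwarz applied to $\int_0^{+\infty} s u_n^2\,ds$ and $\int_0^{+\infty} s v^2\,ds$) then supply a dominating function and dominated convergence applies.

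Finally the consequences follow by splitting $I_\o$: since $(h_u-N)^2=h_u^2-2Nh_u+N^2$, one has
\[
I_\o(u)=\frac12\int_{\R^2}|\nabla u|^2\,dx+\frac{\o}{2}\int_{\R^2}u^2\,dx+\frac{N^2}{2}\int_{\R^2}\frac{u^2}{|x|^2}\,dx+K(u)-\frac{1}{p+1}\int_{\R^2}|u|^{p+1}\,dx.
\]
The three quadratic terms are convex and continuous on $\CH$, hence weakly lower semicontinuous; $K$ is weakly continuous by the above; and $\int_{\R^2}|u|^{p+1}\,dx$ is weakly continuous by the compact embedding $\Hr\hookrightarrow L^{p+1}(\R^2)$. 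Thus $I_\o(u)\le\liminf_n I_\o(u_n)$. The weak continuity of $I_\o'$ follows term by term against a fixed test function: the gradient pairing and the two weighted quadratic pairings are weakly continuous linear forms, $\langle K'(u_n),v\rangle\to\langle K'(u),v\rangle$ by the previous step, and the nonlinearity converges from strong $L^{p+1}$ convergence. I expect the main obstacle to be precisely the first integral in $K'$: the bare weight $|x|^{-2}$ applied to the difference $u_n-u$ is \emph{not} controlled by weak convergence alone, so the integration by parts that re-expresses this term through $h_{u_n}$ and $\dot h_{u_n}$, together with the careful construction of a uniform dominating function, is the delicate point on which the whole argument hinges.
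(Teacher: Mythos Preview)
The paper does not actually prove Proposition~\ref{prop:weak}; it merely quotes it from \cite[Proposition~3.4]{byeon-pre}. So there is no ``paper's own proof'' to compare against, and your write-up stands or falls on its own merits.

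On those merits, your argument is essentially correct and self-contained. The integration-by-parts identity
\[
K(u)=4\pi\int_0^{+\infty}\frac{1}{r^3}\Bigl(\tfrac{1}{3}h_u^3(r)-N\,h_u^2(r)\Bigr)\,dr
\]
is the clean core of the proof: it removes the explicit factor $u^2/|x|^2$ and reduces weak continuity of $K$ to pointwise convergence of $h_{u_n}$ plus a uniform dominating function, both of which you supply. Two small points are worth tightening. First, the step ``$u_n\to u$ in $L^q$ for $q>2$ $\Rightarrow$ $u_n\to u$ in $L^2_{loc}$'' is not a direct implication; you should invoke Rellich on bounded balls instead (which you implicitly need for $h_{u_n}(r)\to h_u(r)$). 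Second, in the treatment of the first (nonlocal) summand of $\langle K'(u_n),v\rangle$ you only sketch the integration by parts; when carried out it produces a term of the same type as the second summand (with $u_n v/r$ paired against a bounded coefficient) plus an integral $\int_0^\infty(\tfrac12 h_{u_n}^2-Nh_{u_n})\,\dot h_{u_n}\,r^{-3}\,dr$, and one should check explicitly that the same pair of bounds $|\dot h_{u_n}|\le Cr^2$ near $0$ and $|\dot h_{u_n}|\le C$ globally gives an $L^1(0,\infty)$ majorant. Once those two lines are filled in, the argument is complete.
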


Next lemma relates boundedness of sequences in $\H$ and in $\CH$,
and will be very useful in Section 3.

\begin{lemma}\label{lemmino} The map $K$ defined in \eqref{K} is actually
well defined in $\H$ and $K(u_n)$ is bounded if $\|u_n\|$ is
bounded. As a consequence, for any sequence $u_n \in \CH$  such that
$I_{\o}(u_n)$ is bounded from above, $\|u_n\|$ is bounded if and only if $\|u_n\|_{\CH}$ is
bounded. \end{lemma}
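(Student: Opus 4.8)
The plan is to prove the first assertion by quantifying how fast $h_u$ vanishes at the origin, and then to deduce the equivalence of boundedness in $\H$ and in $\CH$ by isolating the singular term $\ird u^2/|x|^2\,dx$ inside $I_\o$, which is possible precisely because $N>0$.

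\emph{Step 1 (pointwise control of $h_u$).} I would first rewrite, for radial $u$, $h_u(r)=\frac{1}{4\pi}\int_{|x|\le r}u^2\,dx$, so that $h_u$ is nonnegative, nondecreasing, and globally bounded by $M:=\frac{1}{4\pi}\|u\|_{L^2}^2$. Near the origin I would instead combine Hölder's inequality with the Sobolev embedding $\H\hookrightarrow L^8(\RD)$ to get
\[
h_u(r)=\frac{1}{4\pi}\int_{|x|\le r}u^2\,dx\le C\,\|u\|_{L^8}^2\,r^{3/2}\le C\,\|u\|^2\,r^{3/2}.
\]
The gain of the power $r^{3/2}>r$ is what renders the otherwise singular integrals in $K$ convergent, and reflects the vanishing of $h_u$ at the vortex.

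\emph{Step 2 (boundedness of $K$).} Writing $K(u)=\tfrac12 T_1-N\,T_2$ with $T_1=\ird h_u^2\,u^2/|x|^2\,dx$ and $T_2=\ird h_u\,u^2/|x|^2\,dx$, I would split each integral over $\{|x|\le1\}$ and $\{|x|>1\}$. On $\{|x|>1\}$ one bounds $h_u\le M$ and uses $1/r\le r$ to reduce everything to $\|u\|_{L^2}^2$. On $\{|x|\le1\}$, for $T_1$ the bound $h_u^2\le C\|u\|^4 r^3$ makes the integrand $\le C\|u\|^4 r^2 u^2$, controlled by $\int_0^1 r u^2\,dr\le\|u\|_{L^2}^2$. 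For the more singular $T_2$ I would integrate by parts using $h_u'=\tfrac12 r u^2$, i.e. $h_u u^2/r=(h_u^2)'/r^2$, obtaining
\[
T_2=2\pi\left(\Big[\tfrac{h_u^2}{r^2}\Big]_0^{\infty}+2\int_0^{\infty}\frac{h_u^2}{r^3}\,dr\right)=4\pi\int_0^{\infty}\frac{h_u^2}{r^3}\,dr,
\]
the boundary terms vanishing by Step 1. The integrand is $\le C\|u\|^4$ near $0$ (again by Step 1) and $\le M^2/r^3$ near $\infty$, so $T_2\le C\|u\|^4$. Altogether $|K(u)|\le C(\|u\|^4+\|u\|^6)$, which shows both that $K$ is finite on $\H$ and that it stays bounded along any $\|u_n\|$-bounded sequence.

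\emph{Step 3 (equivalence of the two boundedness notions).} Since $\|u\|_\CH^2=\|u\|^2+\ird u^2/|x|^2\,dx$, the implication ``$\|u_n\|_\CH$ bounded $\Rightarrow\|u_n\|$ bounded'' is immediate. For the converse I would expand $(h_u-N)^2=h_u^2-2Nh_u+N^2$ in $I_\o$, giving
\[
I_\o(u)=\tfrac12\|\n u\|_{L^2}^2+\tfrac{\o}{2}\|u\|_{L^2}^2+K(u)+\tfrac{N^2}{2}\ird\frac{u^2}{|x|^2}\,dx-\tfrac{1}{p+1}\|u\|_{L^{p+1}}^{p+1},
\]
and then, using $N>0$, solve for the singular term and bound it. If $I_\o(u_n)$ is bounded above and $\|u_n\|$ is bounded, the resulting right-hand side is bounded above: $I_\o(u_n)\le C$, the kinetic and mass terms enter with a favorable sign, $|K(u_n)|\le C$ by Step 2, and $\|u_n\|_{L^{p+1}}^{p+1}\le C$ by Sobolev. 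As $\ird u_n^2/|x|^2\,dx\ge0$, it is therefore bounded, hence so is $\|u_n\|_\CH$.

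\emph{Main obstacle.} The delicate point is Step 2, specifically the near-origin analysis of $T_2$: the naive estimate $h_u\le M$ leaves exactly the term $\int u^2/|x|^2$ that is \emph{not} controlled in $\H$, so the argument hinges on the decay rate of $h_u$ at the vortex and on the integration by parts that trades the weight $1/|x|^2$ against that decay. I also note that Step 3 genuinely uses $N>0$, since for $N=0$ the coefficient of $\ird u^2/|x|^2\,dx$ vanishes.
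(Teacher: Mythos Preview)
Your argument is correct. The treatment of Step~3 matches the paper's exactly, but your handling of the term $T_2=\int_{\RD}h_u\,u^2/|x|^2\,dx$ is genuinely different. The paper works in radial coordinates and uses the crude bound $\int_0^r s\,u^2(s)\,ds\le r\int_0^r u^2(s)\,ds$ to cancel one power of $r$ from the denominator, arriving at
\[
T_2\le \frac{\pi}{2}\left(\int_0^{+\infty}u^2(r)\,dr\right)^2=\frac{1}{8\pi}\left(\int_{\RD}\frac{u^2}{|x|}\,dx\right)^2,
\]
and then controls this last integral by splitting at $|x|=1$ and applying H\"older (with an exponent $p>4$) and Sobolev. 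By contrast, you first obtain the pointwise decay $h_u(r)\le C\|u\|^2 r^{3/2}$ and then integrate by parts via $(h_u^2)'=r\,h_u u^2$ to transform $T_2$ into $4\pi\int_0^\infty h_u^2/r^3\,dr$, which your decay estimate makes finite. Both routes are short; the paper's is slightly more elementary in that it avoids integration by parts and the boundary-term checks, while yours gives a cleaner final bound $T_2\le C\|u\|^4$ directly in terms of the $H^1$ norm. For $T_1$ the paper simply cites the $N=0$ case in \cite{byeon}, whereas you redo the estimate; this is harmless. Your final remark that Step~3 requires $N>0$ is correct and consistent with the paper's standing hypothesis.
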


\begin{proof}
By \cite{byeon}, we only need to consider the term:

\begin{align*} \int_{\RD} h_u(x)\frac{u^2(x)}{|x|^2}\, dx =  \pi
\int_0^{+\infty} \frac{u^2(r)}{r} \left(\int_0^r s u^2(s)  \, ds\right)  dr \\
\le \pi \int_0^{+\infty} u^2(r) \left(\int_0^r u^2(s) \, ds \right) dr=
\frac{\pi}{2} \left ( \int_0^{+\infty} u^2(r) \, dr \right )^2
\end{align*} Observe now that:

\begin{align*} 2\pi \int_0^{+\infty}  u^2(r) \, dr = \int_{\RD}
\frac{u^2(x)}{|x|}\, dx  \le \int_{B(0,1)} \frac{u^2(x)}{|x|}\,
dx + \int_{\RD\setminus B(0,1)} u^2(x) \, dx \\ \le
C(\|u\|_{L^2}^2+ \|u\|_{L^p}^2), \ p >4, \end{align*} by Holder
inequality. The first assertion of the Lemma follows then from the Sobolev embedding.

Suppose that $u_n$ is bounded in $\H$; then

$$I_{\o}(u_n)= O(1) + \frac{N^2}{2} \int_{\RD}
\frac{u_n^2(x)}{|x|^2}\, dx,$$ and by hypothesis $u_n$ is bounded in
$\CH$. The reverse is trivial.

\end{proof}

The following is a Pohozaev-type identity for problem
\eqref{equation}, see (2.11), (5.6) in \cite{byeon-pre}:

\begin{proposition} \label{poho} For any $u \in \CH$ solution of
\eqref{equation}, the following identity holds:
\[ 
\ird |\n u|^2 dx +\ird \frac{u^2(x)}{|x|^2}\left(h_u(|x|)
-N\right)^2 dx - \frac{p-1}{p+1}\ird |u|^{p+1} dx=0.
\]
\end{proposition}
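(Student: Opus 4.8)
The plan is to derive the identity by combining two scalar relations that every solution $u$ of \eqref{equation} must satisfy, and arranging them so that the two ``nonlinear'' terms carrying $\o$ and the nonlocal potential cancel. Writing $A_0(|x|)=\int_{|x|}^{+\infty}\frac{h_u(s)-N}{s}u^2(s)\,ds$ for the nonlocal potential, the first relation I would record is the \emph{Nehari identity}, obtained by testing \eqref{equation} against $u$ itself:
\[
\ird|\n u|^2\,dx+\o\ird u^2\,dx+\ird\frac{u^2(x)}{|x|^2}\big(h_u(|x|)-N\big)^2\,dx+\ird A_0(|x|)\,u^2(x)\,dx=\ird|u|^{p+1}\,dx.
\]
This one is immediate. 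The whole content of the proposition is thus to produce a \emph{second} relation in which $\o\ird u^2$ and $\ird A_0 u^2$ occur with exactly these coefficients.

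The second relation I would obtain from the scaling behaviour of $I_\o$. Setting $u_t(x)=u(x/t)$, the fact that $u$ is a critical point gives $\frac{d}{dt}\big|_{t=1}I_\o(u_t)=0$, the corresponding tangent direction being $-x\cdot\n u$. In $\R^2$ the Dirichlet term is scale invariant and contributes nothing; the mass and the $L^{p+1}$ terms scale like $t^2$, producing $\o\ird u^2$ and $-\frac{2}{p+1}\ird|u|^{p+1}$ at $t=1$. For the nonlocal term the key observation is that $h_{u_t}(r)=t^2h_u(r/t)$, so after the change of variables it becomes $\tfrac12\ird\frac{u^2}{|x|^2}(t^2h_u-N)^2\,dx$, whose derivative at $t=1$ equals $2\ird\frac{u^2\,h_u(h_u-N)}{|x|^2}\,dx$. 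This yields the dilation identity
\[
\o\ird u^2\,dx+2\ird\frac{u^2(x)\,h_u(|x|)\big(h_u(|x|)-N\big)}{|x|^2}\,dx=\frac{2}{p+1}\ird|u|^{p+1}\,dx.
\]
The computation that makes the two relations compatible, and which I regard as the algebraic heart of the argument, is the identity
\[
2\ird\frac{u^2(x)\,h_u(|x|)\big(h_u(|x|)-N\big)}{|x|^2}\,dx=\ird A_0(|x|)\,u^2(x)\,dx,
\]
which I would prove by passing to polar coordinates and integrating by parts in $r$, using the two structural relations $r\,u^2(r)=2h_u'(r)$ and $A_0'(r)=-\frac{h_u(r)-N}{r}u^2(r)$; both sides then reduce to $8\pi\int_0^{+\infty}r^{-2}h_u(h_u-N)h_u'\,dr$.

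Subtracting the dilation identity from the Nehari identity, the terms $\o\ird u^2$ and $\ird A_0 u^2$ cancel and, since $1-\tfrac{2}{p+1}=\tfrac{p-1}{p+1}$, one is left with
\[
\ird|\n u|^2\,dx+\ird\frac{u^2(x)}{|x|^2}\big(h_u(|x|)-N\big)^2\,dx=\frac{p-1}{p+1}\ird|u|^{p+1}\,dx,
\]
which is the claimed identity. I expect the main obstacle to lie not in this algebra but in the rigorous justification of the dilation step: one must check that $-x\cdot\n u$ is an admissible direction in $\CH$ (equivalently, that differentiation under the integral sign along $u_t$ is legitimate) and that all boundary contributions vanish. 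At the origin this is delicate because of the $|x|^{-2}$ singularity, but it is controlled by $u(0)=0$ together with $h_u(r)=O(r^2)$ and the finiteness $\ird u^2/|x|^2<+\infty$ built into $\CH$; at infinity it rests on the exponential decay of $u$ and $\n u$ supplied by the coercive mass term ($\o>0$). A fully elementary alternative would replace the scaling by multiplying \eqref{equation} with $x\cdot\n u$ and integrating over $B_R$ before letting $R\to+\infty$, in which case the vanishing of the gradient boundary integral and the decay of the nonlocal terms are precisely the points demanding the a priori estimates.
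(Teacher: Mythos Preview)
Your derivation is correct. The combination of the Nehari identity and the dilation relation $\frac{d}{dt}\big|_{t=1}I_\o(u_t)=0$ with $u_t(x)=u(x/t)$ does yield the stated Pohozaev identity, and the key cancellation $2\ird \frac{u^2 h_u(h_u-N)}{|x|^2}\,dx=\ird A_0\,u^2\,dx$ follows exactly as you say from $r u^2=2h_u'$ and $A_0'=-\tfrac{h_u-N}{r}u^2$ after one integration by parts. Your discussion of the regularity/decay issues needed to legitimise the scaling step is also to the point.

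Regarding comparison with the paper: the authors do not supply a proof of this proposition at all; they merely record it and refer to equations (2.11), (5.6) of \cite{byeon-pre}. So there is nothing to compare beyond noting that what you have written is the standard route to such identities (and almost certainly the one carried out in the cited reference). If anything, your write-up is more informative than the paper here, since it isolates the algebraic identity linking the $A_0$-term to the nonlocal scaling contribution, which is precisely the feature that distinguishes this Pohozaev relation from the classical one.
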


We now state an inequality which will prove to be fundamental in
our analysis. This inequality is proved in \cite[Proposition
3.5]{byeon-pre}, where also the maximizers are found.

\begin{proposition} For any $u \in \CH$,
\beq \label{ineq} \int_{\R^2} |u(x)|^4 \, dx \le 4 \left
(\int_{\R^2} |\nabla u(x)|^2 \, dx \right )^{1/2} \left
(\int_{\R^2} \frac{u^2(x)}{|x|^2} \left( h_u(|x|) -N\right )^2 dx
\right)^{1/2}. \eeq
\end{proposition}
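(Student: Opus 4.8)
The plan is to pass to radial coordinates and exploit the fact that the weight $h_u$ is itself an antiderivative of $u^2$. Writing $u=u(r)$ and recalling that $h_u(r)=\frac12\int_0^r s\,u^2(s)\,ds$, we have the pointwise identity $h_u'(r)=\frac12\,r\,u^2(r)$, and since $N$ is constant this gives $r\,u^2(r)=2\,(h_u(r)-N)'$. This is the crucial observation: it lets us trade one factor of $u^2$ in the quartic term for a derivative of the very quantity $h_u-N$ appearing on the right-hand side.

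Concretely, in polar coordinates the three integrals read $\ird|u|^4\,dx=2\pi\int_0^{+\infty}u^4 r\,dr$, then $\ird|\n u|^2\,dx=2\pi\int_0^{+\infty}(u')^2 r\,dr$, and $\ird\frac{u^2}{|x|^2}(h_u-N)^2\,dx=2\pi\int_0^{+\infty}\frac{u^2}{r}(h_u-N)^2\,dr$. First I would rewrite the quartic integral using the identity above,
\[
\int_0^{+\infty}u^4 r\,dr=2\int_0^{+\infty}u^2\,(h_u-N)'\,dr,
\]
and integrate by parts to get $\int_0^{+\infty}u^4 r\,dr=2\big[u^2(h_u-N)\big]_0^{+\infty}-4\int_0^{+\infty}u\,u'\,(h_u-N)\,dr$.

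Next I would check that the boundary term vanishes: at the origin this uses $u(0)=0$, which holds because $\CH\subset\{u\in C(\R^2):u(0)=0\}$, and at infinity it follows from the decay of radial $H^1$ functions together with the boundedness of $h_u$. Thus $\int_0^{+\infty}u^4 r\,dr=-4\int_0^{+\infty}u\,u'\,(h_u-N)\,dr$, and bounding by the absolute value and splitting $|u|\,|u'|\,|h_u-N|=\big(|u'|\sqrt r\big)\big(|u|\,|h_u-N|/\sqrt r\big)$, the Cauchy--Schwarz inequality yields
\[
\int_0^{+\infty}u^4 r\,dr\le 4\left(\int_0^{+\infty}(u')^2 r\,dr\right)^{1/2}\left(\int_0^{+\infty}\frac{u^2}{r}(h_u-N)^2\,dr\right)^{1/2}.
\]
Multiplying through by $2\pi$ and distributing the factors $(2\pi)^{-1/2}$ back into each integral restores the claimed constant $4$ exactly, since $2\pi\cdot(2\pi)^{-1/2}\cdot(2\pi)^{-1/2}=1$.

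The main obstacle is rigor in the integration by parts and the boundary terms rather than any deep idea. I would first establish the identity for $u$ in a dense class (smooth functions vanishing near the origin and with compact support), where all manipulations are legitimate, and then pass to the limit in $\CH$, using that each of the three integrals is continuous with respect to the $\CH$-norm; the continuity of the nonlocal term under strong convergence is already available from the estimates behind Lemma \ref{lemmino} and Proposition \ref{prop:weak}. A secondary point worth verifying is that the quartic integral is finite to begin with, which follows from the embedding $\CH\subset L^{\infty}(\R^2)$ combined with $u\in L^2(\R^2)$.
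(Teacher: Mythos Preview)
Your argument is correct. The key identity $r\,u^2(r)=2\,(h_u(r)-N)'$ followed by integration by parts and Cauchy--Schwarz with the weight split $\sqrt r\cdot r^{-1/2}$ gives exactly the constant $4$, and the boundary terms vanish for the reasons you state: $u(0)=0$ because $\CH\subset\{u\in C(\R^2):u(0)=0\}$, and at infinity $h_u(\infty)=\frac{1}{4\pi}\|u\|_{L^2}^2$ is finite while $u(r)\to 0$ by the radial decay of $H^1_r$ functions.

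As for comparison with the paper: there is nothing to compare, because the paper does not prove this proposition at all. It simply quotes the inequality from \cite[Proposition~3.5]{byeon-pre}. Your proof is in fact the standard one for this class of inequalities (the case $N=0$ already appears in \cite{byeon} by the same mechanism), so you are very likely reproducing the argument of \cite{byeon-pre} rather than giving an alternative route. The density argument you sketch at the end is a reasonable way to make the integration by parts rigorous, though one can also work directly with $u\in\CH$ since both $u^2$ and $h_u$ are locally absolutely continuous on $(0,\infty)$.
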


As commented in the introduction, this paper is concerned with the
boundedness from below of $I_\o$. First of all, let us give a
heuristic derivation of the limit energy functional. Consider
$u(r)$ a fixed function, and define $u_\rho(r)= u(r-\rho)$. Let us
now estimate $I_\o(u_\rho)$ as $\rho \to +\infty$; after the
change of variables $r \to r+\rho$, we obtain:
\begin{align*}
\frac{I_\o (u_\rho)}{2\pi} &= \dis \frac 12 \int_{-\rho}^{+\infty}
(|u'|^2 + \o u^2)(r+\rho) \, dr
\\
& \quad+\dis \frac{1}{8} \int_{-\rho}^{\infty}
\frac{u^2(r)}{r+\rho}\left(\int_{-\rho}^r (s+\rho) u^2(s) \, ds -
2N\right)^2 dr  - \dis \frac{1}{p+1}  \int_{-\rho}^{\infty}
|u|^{p+1}(r+\rho) \, dr.
\end{align*}

We estimate the above expression by simply replacing the
expressions $(r+\rho)$, $(s+\rho)$ with the constant $\rho$;
observe that the estimate is independent of $N$:
$$(2\pi)^{-1} I_\o (u_\rho)  $$$$\sim  \dis \rho \left [ \frac 12 \ir (|u|'^2
+ \o u^2) \, dr
 +\dis \frac{1}{8} \ir
u^2(r) \left(\int_{-\infty}^r  u^2(s) \, ds\right)^2 dr  - \dis
\frac{1}{p+1}  \ir |u|^{p+1} \, dr \right ]$$
$$ =\rho \left [ \frac 12 \ir (|u|'^2
+ \o u^2) \, dr
 +\dis \frac{1}{24} \left ( \ir
u^2 dr \right)^3  - \dis \frac{1}{p+1}  \ir |u|^{p+1} \, dr \right
]. $$

Therefore, it is natural to consider the limit functional $J_\o:
H^1(\R) \to \R$,

\beq \label{def J} J_\o (u)= \frac 1 2  \ir \left ( |u'|^2 + \omega
u^2 \right ) dr + \frac{1}{24} \left ( \ir u^2 \, dr \right)^3 -
\frac{1}{p+1} \ir |u|^{p+1}\, dr. \eeq  Clearly, the
Euler-Lagrange equation of \eqref{def J} is the following limit
problem: \beq \label{limit}
 -u'' + \omega u + \frac 1 4 \left ( \ir
u^2(s)\, ds \right )^2 u = |u|^{p-1} u \quad \hbox{in }\R. \eeq

Let $u$ be a
positive solution of \eqref{limit}, and define $k= \omega + \frac
1 4 \left( \ir u^2\, dr\right )^2$. Then, it is well known that $u(r)= w_k(r-\xi)$ for some $\xi \in \R$, where
\[ 
w_k(r)=
k^{\frac{1}{p-1}}w_1(\sqrt{k}r), \quad \hbox{with} \quad w_1(r)=
\left ( \frac{2}{p+1} \cosh^2 \left (\frac{p-1}{2} r \right)
\right)^{\frac{1}{1-p}}.
\]

We now recall the value of $k$:
$$ k= \omega + \frac 1 4 \left( \ir w_k^2(r)\, dr\right )^2=
\omega + \frac 1 4 k^{\frac{4}{p-1}} \left( \ir w_1^2(\sqrt{k}r)\,
dr\right )^2.$$ A change of variables leads us to the identity:

\beq \label{eq-k} k= \o + \frac 1 4 m^2 k^{\frac{5-p}{p-1}},\ k>0,
\eeq with $m$ is given in \eqref{defm}. Therefore, the existence
of solutions for \eqref{limit} reduces to the existence of
solutions of the algebraic equation \eqref{eq-k}. Moreover, we are
also interested in the energy of those solutions, and whether it
is positive or negative. Those questions have been treated in
\cite[Section 3]{AD}, where the following results were obtained:

\begin{proposition} \label{explicit} Assume $p \in (1,3)$ and take $\o_0$ as in \eqref{w0}. Then:
\begin{enumerate}
\item for any $\o>0$, $J_\o$ is coercive and attain its infimum;

\item There exists $\o_1 > \o_0$ such that for $\o \in (0, \o_1)$,
equation \eqref{eq-k} has two solutions $k_1(\o)<k_2(\o)$ and
$w_{k_1}(r),w_{k_2}(r)$ are the only two positive solutions of
\eqref{limit} (apart from translations);

\item if $\o> \o_0$, $min \, J_{\o} =0$ and the unique minimizer
is $0$.

\item if $\o= \o_0$, $min \, J_{\o}=0$ and is attained at $0$ and
$w_{k_2}$.

\item if $\o \in (0,\o_0)$, $min \, J_{\o}<0$ and the minimizer is
$w_{k_2}$, which is unique (up to change of sign and translation).
\end{enumerate}
\end{proposition}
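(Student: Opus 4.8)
The plan is to reduce the whole statement to the autonomous one-dimensional problem $-u''+ku=|u|^{p-1}u$ together with the scalar equation \eqref{eq-k}, exploiting that every positive solution of \eqref{limit} is an explicit soliton $w_k$.

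\emph{Item (1).} Coercivity: writing $A=\ir|u'|^2$ and $B=\ir u^2$, the one-dimensional Gagliardo--Nirenberg inequality gives $\ir|u|^{p+1}\le C\,A^{(p-1)/4}B^{(p+3)/4}$. Since $p<3$ forces $(p-1)/4<1/2$, Young's inequality absorbs a small multiple of $A$ and bounds the nonlinear term by $\eps A+C_\eps B^{(p+3)/(5-p)}$; as $(p+3)/(5-p)<3$ exactly when $p<3$, the term $\tfrac1{24}B^3$ dominates and $J_\o(u)\to+\infty$ when $\|u\|_{H^1(\R)}\to+\infty$. For the infimum I would use Schwarz symmetrization: the terms $\ir u^2$ and $\ir|u|^{p+1}$ are preserved, $\ir|u'|^2$ does not increase, and the nonlocal term depends only on $\ir u^2$, so $J_\o$ does not increase. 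A minimizing sequence can thus be taken symmetric decreasing and, being bounded by coercivity, enjoys the uniform decay $u(r)^2\le\|u\|_{L^2}^2/(2|r|)$; this restores compactness in $L^{p+1}(\R)$ and, with weak lower semicontinuity of the remaining terms, produces a minimizer.

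\emph{Item (2).} Put $\beta=(5-p)/(p-1)$, which satisfies $\beta>1$ precisely because $p<3$, and study $g(k)=k-\tfrac14 m^2k^{\beta}$, so that \eqref{eq-k} reads $g(k)=\o$. As $g(0)=0$, $g'(0)=1$ and $g'(k)\to-\infty$, $g$ has a unique maximum at some $k^\ast$ and then decreases to $-\infty$; hence for $\o\in(0,\o_1)$ with $\o_1:=g(k^\ast)$ there are exactly two roots $k_1<k^\ast<k_2$. Since the positive $H^1$ solutions of $-u''+ku=|u|^{p-1}u$ are exactly the translates of $w_k$, and $u=w_k$ solves \eqref{limit} if and only if $k$ solves \eqref{eq-k}, the functions $w_{k_1},w_{k_2}$ are the only positive solutions of \eqref{limit} up to translation.

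\emph{Items (3)--(5).} The heart of the argument is to evaluate the energy along a soliton. Testing \eqref{limit} with $u$ (Nehari) and using the first integral of the autonomous ODE (the one-dimensional Pohozaev identity) I obtain $A=\tfrac{p-1}{p+3}kP$ and $C=\tfrac{2(p+1)}{p+3}kP$, where $P=\ir u^2$ and $C=\ir|u|^{p+1}$. Inserting these into $J_\o$ and using $\tfrac14P^2=k-\o$ together with $P=m\,k^{\beta/2}$ yields the compact formula
\[
J_\o(w_k)=\frac{p-1}{p+3}\,kP-\frac{1}{12}\,P^3=P\left(\frac{p-1}{p+3}\,k-\frac{1}{12}m^2k^{\beta}\right),
\]
so that $J_\o(w_k)$ is positive, zero or negative according as $k<k_0$, $k=k_0$ or $k>k_0$, where $k_0=\big(\tfrac{12(p-1)}{(p+3)m^2}\big)^{1/(\beta-1)}$. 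Since $k_1<k^\ast<k_0$ (the inequality $k_0>k^\ast$ being again equivalent to $p<3$), one has $J_\o(w_{k_1})>0$ throughout $(0,\o_1)$, while on the decreasing branch $k_2(\o)$ is strictly decreasing and equals $k_0$ exactly when $\o=g(k_0)$; a direct computation identifies $g(k_0)$ with the value $\o_0$ of \eqref{w0} and, via $k_0>k^\ast$, gives $\o_0<\o_1$. Hence $J_\o(w_{k_2})>0$ for $\o\in(\o_0,\o_1)$, $=0$ at $\o_0$, and $<0$ for $\o\in(0,\o_0)$. Finally, because $J_\o(|u|)\le J_\o(u)$, the minimizer from item (1) may be taken nonnegative; if nonzero it is a positive solution, hence a translate of $w_{k_1}$ or $w_{k_2}$. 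Comparing $J_\o(0)=0$ with the signs just found (and noting that for $\o\ge\o_1$ the only critical point is $0$) delivers items (3)--(5), including uniqueness of the minimizer $w_{k_2}$ up to sign and translation when $\o\in(0,\o_0)$.

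\emph{Main obstacle.} The Gagliardo--Nirenberg/Young estimate and the elementary study of $g$ are routine. The two delicate points are securing compactness for the minimization---handled by symmetrization and the pointwise decay of symmetric decreasing functions---and the bookkeeping that produces the identity for $J_\o(w_k)$ together with the explicit verification that the threshold $g(k_0)$ coincides with $\o_0$ in \eqref{w0}; this algebraic identification is where most of the care is required.
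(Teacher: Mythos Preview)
The paper does not actually prove this proposition: immediately before the statement it writes ``Those questions have been treated in \cite[Section 3]{AD}, where the following results were obtained,'' and no argument is given afterwards. So there is no in-paper proof to compare against; the result is quoted from \cite{AD}.

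That said, your proposal is correct and is almost certainly the argument of \cite{AD}. Your coercivity estimate via Gagliardo--Nirenberg and Young is right (the exponents $(p-1)/4<1/2$ and $(p+3)/(5-p)<3$ are exactly equivalent to $p<3$), and your compactness step for the infimum is fine: symmetric rearrangement preserves $\ir u^2$ and $\ir |u|^{p+1}$, does not increase $\ir |u'|^2$, and the bound $u(r)^2\le \|u\|_{L^2}^2/(2|r|)$ for symmetric decreasing $u$ gives $\int_{|r|>R}|u|^{p+1}\le C R^{-(p-1)/2}$, which is enough to upgrade weak $H^1$ convergence to strong $L^{p+1}$ convergence. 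For item~(2), your analysis of $g(k)=k-\tfrac14 m^2k^\beta$ with $\beta=(5-p)/(p-1)>1$ is exactly the paper's reduction \eqref{eq-k}. Your energy identity
\[
J_\o(w_k)=\frac{p-1}{p+3}\,kP-\frac{1}{12}\,P^3=P\Bigl(\frac{p-1}{p+3}\,k-\frac{1}{12}m^2k^{\beta}\Bigr)
\]
is correct (it follows from $A=\tfrac{p-1}{p+3}kP$, $C=\tfrac{2(p+1)}{p+3}kP$ and $\o=k-\tfrac14 P^2$), the inequality $k_0>k^\ast$ is equivalent to $p<3$, and one checks directly that $g(k_0)=\dfrac{2(3-p)}{p+3}\,k_0$ coincides with the expression \eqref{w0}. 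Your final trichotomy argument, comparing $J_\o(0)=0$ with the signs of $J_\o(w_{k_1})>0$ and of $J_\o(w_{k_2})$, together with the observation that a nonnegative nonzero minimizer must be (a translate of) $w_{k_1}$ or $w_{k_2}$, yields items (3)--(5). The only point worth making explicit is that for $\o\ge \o_1$ the equation \eqref{eq-k} has at most one root (equal to $k^\ast<k_0$ when $\o=\o_1$, none when $\o>\o_1$), so the corresponding $J_\o(w_k)$ is still strictly positive and the unique minimizer remains $0$; you allude to this but it deserves one sentence.
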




In this paper we are able to relate $I_\o$ with the limit
functional $J_\o$ in the following way:
$$ \inf I_\o > -\infty \ \Leftrightarrow \ \inf J_\o =0.$$
That is the reason why the explicit value $\o_0$ comes as a
threshold for $I_\o$.

We finish this section with a technical result from
\cite[Proposition 3.7]{AD}, that will be of use later.

\begin{proposition} \label{extra}
Assume $\omega \ge \o_0$, and $u_n \in H^1(\R)$ such that
$J_\o(u_n) \to 0$. There holds
\begin{enumerate}
\item if $\o > \o_0$, then $u_n \to 0$ in $H^1(\R)$; \item if
$\o=\o_0$, then, up to a subsequence, either $u_n \to 0$ or
$u_n(\cdot-x_n) \to \pm w_{k_2}$ in $H^1(\R)$, for some sequence
$x_n \in \R$.

\end{enumerate}
\end{proposition}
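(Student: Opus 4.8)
The plan is to run a concentration--compactness argument, exploiting the explicit description of the minimizers in Proposition \ref{explicit} together with the translation invariance of $J_\o$. Since $J_\o$ is coercive (Proposition \ref{explicit}, item 1) and $J_\o(u_n) \to 0 = \inf J_\o$, the sequence $\{u_n\}$ is bounded in $H^1(\R)$. First I would invoke Lions' vanishing lemma to split into two alternatives (up to a subsequence): either $\sup_{y \in \R}\int_{y-1}^{y+1} u_n^2\, dr \to 0$, or there are $\delta > 0$ and $x_n \in \R$ with $\int_{x_n-1}^{x_n+1} u_n^2\,dr \ge \delta$. In the vanishing case, Lions' lemma gives $u_n \to 0$ in $L^{p+1}(\R)$, so that $\ir |u_n|^{p+1}\,dr \to 0$; since the remaining three terms of $J_\o$ are nonnegative and their sum tends to $0$, and $\o > 0$, we conclude $\ir(|u_n'|^2 + \o u_n^2)\,dr \to 0$, i.e. $u_n \to 0$ in $H^1(\R)$. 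This outcome is compatible with both items of the statement.

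In the non-vanishing case, set $\tilde u_n = u_n(\cdot + x_n)$; by translation invariance $J_\o(\tilde u_n) = J_\o(u_n) \to 0$, and up to a subsequence $\tilde u_n \weakto \tilde u$ in $H^1(\R)$ with $\tilde u \ne 0$ (the lower bound on the local mass survives the strong $L^2_{loc}$ convergence). Writing $\tilde u_n = \tilde u + v_n$ with $v_n \weakto 0$, I would apply the Brezis--Lieb lemma to the quadratic and the $L^{p+1}$ terms, which split additively up to $o(1)$, and expand the nonlocal term through $a := \ir \tilde u^2\,dr$ and $b_n := \ir v_n^2\,dr$, using $(\ir \tilde u_n^2\,dr)^3 = (a+b_n)^3 + o(1)$. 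This yields
\[
J_\o(\tilde u_n) = J_\o(\tilde u) + \Big[\tfrac12\ir(|v_n'|^2 + \o v_n^2)\,dr - \tfrac{1}{p+1}\ir|v_n|^{p+1}\,dr + \tfrac{1}{24}\big((a+b_n)^3 - a^3\big)\Big] + o(1).
\]

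The heart of the matter, and the step I expect to be the main obstacle, is controlling the bracketed remainder. Here I would use the strict superadditivity of the cube: since $a, b_n \ge 0$ one has $(a+b_n)^3 - a^3 \ge b_n^3$, so the bracket is bounded below by $J_\o(v_n) \ge \inf J_\o = 0$ (recall $\o \ge \o_0$, Proposition \ref{explicit}). Passing to the limit in the displayed identity gives $J_\o(\tilde u) \le 0$, hence $J_\o(\tilde u) = 0$ and $\tilde u$ is a minimizer; moreover the bracket tends to $0$, which forces both $J_\o(v_n) \to 0$ and $(a+b_n)^3 - a^3 - b_n^3 = 3a^2 b_n + 3ab_n^2 \to 0$. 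Since $a > 0$, this gives $b_n \to 0$, and then a Gagliardo--Nirenberg interpolation (using $\|v_n\|_{H^1(\R)}$ bounded and $\|v_n\|_{L^2} \to 0$) yields $\ir|v_n|^{p+1}\,dr \to 0$; feeding this back into $J_\o(v_n) \to 0$ gives $\ir(|v_n'|^2 + \o v_n^2)\,dr \to 0$, so $v_n \to 0$ and $\tilde u_n \to \tilde u$ strongly in $H^1(\R)$. Finally I would read off the conclusion from Proposition \ref{explicit}: when $\o > \o_0$ the unique minimizer is $0$, contradicting $\tilde u \ne 0$, so the non-vanishing case cannot occur and $u_n \to 0$; when $\o = \o_0$ the nonzero minimizers are exactly $\pm w_{k_2}$ up to translation, so $\tilde u = \pm w_{k_2}(\cdot - y)$ for some $y \in \R$, and absorbing $y$ into the translation sequence gives $u_n(\cdot - x_n) \to \pm w_{k_2}$ in $H^1(\R)$.
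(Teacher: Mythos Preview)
Your argument is correct. The concentration--compactness dichotomy, the Brezis--Lieb splitting of the local terms, the treatment of the nonlocal cubic via $\ir \tilde u_n^2 = a + b_n + o(1)$, and the key observation that $(a+b_n)^3 - a^3 \ge b_n^3$ (so the remainder dominates $J_\o(v_n)\ge 0$) are all sound. The end game---forcing $b_n \to 0$ from the cross terms $3a^2 b_n + 3ab_n^2 \to 0$, then bootstrapping via Gagliardo--Nirenberg to $v_n \to 0$ in $H^1(\R)$---is clean, and the classification of minimizers from Proposition~\ref{explicit} closes both cases.

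As for comparison: the present paper does not prove Proposition~\ref{extra} at all; it is quoted verbatim from \cite[Proposition~3.7]{AD}. So there is no in-paper proof to compare against. Your approach is the natural one for a minimizing sequence of a translation-invariant coercive functional with a nonlocal term of this type, and it is entirely reasonable to expect that the proof in \cite{AD} follows the same concentration--compactness template; the only feature specific to this functional is the superadditivity of the cube, which you have handled correctly.
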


\section{Proof of Theorems \ref{teo1}, \ref{teo2}}\label{sezproof}

Our first lemma makes rigorous the heuristic derivation of the
limit functional made in Section 2. Since the functions in $\CH$
must vanish at $0$, we need to truncate our sequence around the
origin. For that purpose, take a Lipschitz continuous function
$\phi_0:\R \to \R$ such that \beq \label{def-phi_0}
\phi_0(r)=\left\{
\begin{array}{ll}
0, & \hbox{if } |r|\le 1,
\\
1, & \hbox{if }  |r| \ge 2,
\end{array}
\right.
\qquad |\phi'_0(r)|\le 1.
\eeq
\begin{lemma}\label{le:asin}
Let $U\in H^1(\R)$ be an even function which decays to zero
exponentially at infinity, and $\phi_0(r)$ as in
(\ref{def-phi_0}). Let us denote $U_\rho(r)= \phi_0(r)U(r-\rho)$. Then
there exists $C>0$ such that:
\[
I_\o(U_\rho)=2 \pi \rho J_\o(U(r))-C+o_\rho(1).
\]
\end{lemma}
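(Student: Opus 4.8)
The plan is to compute $I_\o(U_\rho)$ term by term using the radial representation in polar coordinates, perform the change of variables $r \to r + \rho$ to center the computation, and then carefully track which contributions scale like $\rho$, which contribute an $O(1)$ constant, and which vanish as $\rho \to +\infty$. Recall that for a radial function $v$, $I_\o(v) = 2\pi \int_0^{+\infty} r \bigl[ \tfrac12 (|v'|^2 + \o v^2) + \tfrac{1}{8} \tfrac{v^2}{r^2}(\int_0^r s v^2\,ds - 2N)^2 - \tfrac{1}{p+1}|v|^{p+1} \bigr]\, dr$. Since $U_\rho(r) = \phi_0(r) U(r-\rho)$ and $U$ decays exponentially, for $\rho$ large the cutoff $\phi_0$ equals $1$ on the essential support of $U(r-\rho)$ (which concentrates near $r = \rho \ge 2$), so replacing $U_\rho$ by $U(r-\rho)$ introduces only exponentially small errors in most terms. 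The substitution $r \mapsto r + \rho$ turns the integrals into $\int_{-\rho}^{+\infty}$ against the even profile $U(r)$.

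For the quadratic and nonlinearity terms I would factor $(r+\rho) = \rho(1 + r/\rho)$ and expand: the leading $\rho$-multiple reproduces $2\pi\rho$ times the one-dimensional kinetic/potential/nonlinearity parts of $J_\o$, while the $r$-correction, being odd against the even $U$ (or exponentially localized), either vanishes or contributes to the $o_\rho(1)$ term. The genuinely delicate piece is the nonlocal Chern-Simons term $\tfrac{1}{8}\int \tfrac{U^2(r)}{r+\rho}\bigl(\int_{-\rho}^r (s+\rho) U^2(s)\, ds - 2N\bigr)^2\, dr$. Here I would expand the inner integral as $\rho \int_{-\infty}^r U^2(s)\, ds + \int_{-\infty}^r s\, U^2(s)\, ds - 2N + (\text{exp. small})$, and the prefactor $\tfrac{1}{r+\rho} = \tfrac{1}{\rho}(1 - r/\rho + \cdots)$. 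Multiplying out, the $\rho^2$ inside the square against the $1/\rho$ prefactor yields a net factor of $\rho$ times $\tfrac18 \int U^2 (\int_{-\infty}^r U^2)^2\, dr$, which after integration by parts equals $\tfrac{1}{24}(\int U^2)^3$ — exactly the nonlocal term of $J_\o$. The key point is that the $N$-dependent and the $\int s\,U^2$ cross-terms produce an $O(1)$ quantity; I expect the $-C$ to arise precisely from the cross term between $\rho \int_{-\infty}^r U^2(s)\,ds$ and the constant $-2N$ (or the $\int s U^2$ correction), after the $1/\rho$ from the prefactor cancels one power of $\rho$, leaving a $\rho$-independent constant.

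The main obstacle, therefore, is isolating the exact constant $C$ and verifying that it is genuinely independent of $\rho$ (and showing it is a fixed positive number, as asserted). This requires expanding the square $\bigl(\rho A(r) + B(r) - 2N\bigr)^2 = \rho^2 A^2 + 2\rho A(B - 2N) + (B-2N)^2$ with $A(r) = \int_{-\infty}^r U^2$, $B(r) = \int_{-\infty}^r s U^2$, and then multiplying by $\tfrac{1}{\rho}(1 - \tfrac{r}{\rho} + O(\rho^{-2}))$ and collecting orders of $\rho$. The $\rho^1$-order must reproduce the $J_\o$ nonlocal term; the $\rho^0$-order gives $-C$; and all remaining terms must be shown to be $o_\rho(1)$, using the exponential decay of $U$ to control the tails of the integrals $\int_{-\infty}^r s U^2(s)\, ds$ and to justify extending $\int_{-\rho}^{+\infty}$ to $\int_{-\infty}^{+\infty}$. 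I would also separately check that the errors introduced by the cutoff $\phi_0$ (which affects the region $r \in [1,2]$, i.e. $r - \rho \in [1-\rho, 2-\rho]$, deep in the exponentially small tail of $U$) contribute only to $o_\rho(1)$. Assembling these three orders gives the claimed expansion $I_\o(U_\rho) = 2\pi\rho J_\o(U) - C + o_\rho(1)$.
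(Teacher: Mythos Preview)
Your strategy is correct and would yield the result, but it differs from the paper's argument in one structural respect. The paper does not redo the full expansion: it invokes \cite[Lemma~4.1]{AD}, where the identity $I_\o(U_\rho)=2\pi\rho\,J_\o(U)-C_0+o_\rho(1)$ is already established for $N=0$, and then only estimates the two \emph{extra} terms produced by the vortex, namely
\[
\pi N^2\!\int_0^{+\infty}\frac{U_\rho^2(r)}{r}\,dr
\quad\text{and}\quad
-\pi N\!\int_0^{+\infty}\frac{U_\rho^2(r)}{r}\left(\int_0^r sU_\rho^2(s)\,ds\right)dr,
\]
coming from expanding $(h_u-N)^2=h_u^2-2Nh_u+N^2$. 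The first is $o_\rho(1)$ (it is $\sim\rho^{-1}\int U^2$), and the second, after the shift $r\mapsto r+\rho$, converges to $-\pi N\int_\R U^2(r)\bigl(\int_{-\infty}^r U^2\bigr)dr=-\tfrac{\pi N}{2}\bigl(\int_\R U^2\bigr)^2$, a strictly negative constant. This makes the origin and sign of the $N$-dependent part of $-C$ completely transparent with almost no computation.

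Your from-scratch expansion---writing the inner integral as $\rho A(r)+B(r)-2N$ and the prefactor as $\rho^{-1}(1-r/\rho+\cdots)$---is sound and self-contained, but it forces you to also control the purely $N$-independent $O(1)$ contributions (the $-rA^2$ and $2AB$ cross terms), which is exactly the work already done in \cite{AD}. Your hesitation about where $-C$ comes from (``the $-2N$ cross term or the $\int sU^2$ correction'') reflects this: in fact \emph{both} contribute, the latter being the $N=0$ constant from \cite{AD} and the former being the new $N$-piece. If you split the computation as the paper does, the sign of the new contribution is immediate, and you avoid re-proving the $N=0$ case.
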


\begin{proof}
This estimate has been accomplished in \cite[Lemma 4.1]{AD} for
$N=0$, so we just need to estimate the extra terms:

\begin{align*}
N^2\int_0^{+\infty}  \frac{U_\rho^2(r)}{r} dr-N\int_0^{+\infty}
\frac{U_\rho^2(r)}{r}\left(\int_0^r s U_\rho^2(s) \, ds\right) dr.
\end{align*}
By using the properties of the cut-off function $\phi_0$ we have
\begin{align*}
\int_0^{+\infty}  \frac{U_\rho^2(r)}{r} dr=o_\rho(1)
\end{align*}
and it is not difficult to see that
\begin{align*}
\int_0^{+\infty}  \frac{U_\rho^2(r)}{r}\left(\int_0^r s
U_\rho^2(s) \, ds\right) dr
=\int_{-\infty}^{+\infty}  U^2(r) \left(\int_{-\infty}^r U^2(s) \,
ds\right)dr +o_{\rho}(1)= C + o_{\rho}(1),
\end{align*}
with $C>0$. Hence the conclusion follows.

\end{proof}

In the next proposition we make use of the fundamental inequality
\eqref{ineq} to study the behavior of unbounded sequences with
energy bounded from above.

\begin{proposition} \label{prop-fund}
Assume $\o>0$, and $u_n \in \CH$ such that $\|u_n\| $ is unbounded
but $I_{\o}(u_n)$ is bounded from above. Then, there exists a
subsequence (still denoted by $u_n$) such that:
\begin{enumerate}

\item[i)] for all $\e>0$, $\displaystyle \int_{\e
\|u_n\|^2}^{+\infty} \big(|u_n'|^2 + u_n^2 \big) \, dr \le C$;

\item[ii)] there exists $\d \in (0,1)$ such that  $\displaystyle
\int_{\d \|u_n\|^2}^{\d^{-1} \|u_n\|^2} \big(| u_n'|^2 + u_n^2 \big )\, dr
\ge c>0$;

\item[iii)] $\|u_n\|_{L^2(\RD)} \to +\infty$.

\end{enumerate}

\end{proposition}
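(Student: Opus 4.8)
The plan is to establish the three claims in the order (i), (iii), (ii), since only the last is substantial. Passing to a subsequence with $\|u_n\|\to+\infty$, I abbreviate $A_n=\ird|\n u_n|^2\,dx$, $\ell_n^2=\ird u_n^2\,dx$, $D_n=\ird\frac{u_n^2}{|x|^2}(h_{u_n}-N)^2\,dx$ and $E_n=\ird|u_n|^{p+1}\,dx$, so that $\|u_n\|^2=A_n+\ell_n^2$ and the hypothesis reads
\[
\tfrac12(A_n+D_n)+\tfrac{\o}{2}\,\ell_n^2-\tfrac{1}{p+1}E_n\le C .
\]
All one–dimensional integrals below are taken against the radial measure, in particular $\|u_n\|^2=2\pi\int_0^{+\infty}(|u_n'|^2+u_n^2)\,r\,dr$. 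Claim (i) is then immediate from the radial weight: on $\{r\ge\e\|u_n\|^2\}$ one has $1\le r/(\e\|u_n\|^2)$, whence
\[
\int_{\e\|u_n\|^2}^{+\infty}(|u_n'|^2+u_n^2)\,dr\le\frac{1}{\e\|u_n\|^2}\int_0^{+\infty}(|u_n'|^2+u_n^2)\,r\,dr=\frac{1}{2\pi\e}.
\]

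For (iii) I combine interpolation with the fundamental inequality \eqref{ineq}. Since $p\in(1,3)$ we have $p+1\in(2,4)$ and $E_n\le\ell_n^{3-p}\big(\ird|u_n|^4\,dx\big)^{(p-1)/2}$; inserting \eqref{ineq} gives $E_n\le C\,\ell_n^{3-p}(A_nD_n)^{(p-1)/4}$. If $\ell_n$ were bounded along a subsequence, then $\tfrac{\o}{2}\ell_n^2$ is bounded and, using $(A_nD_n)^{1/2}\le\tfrac12(A_n+D_n)$, the energy bound becomes $A_n+D_n\le C+C(A_n+D_n)^{(p-1)/2}$; as $(p-1)/2<1$ this forces $A_n+D_n$ bounded, hence $\|u_n\|^2=A_n+\ell_n^2$ bounded, a contradiction. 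Thus $\ell_n=\|u_n\|_{L^2}\to+\infty$. Running the same inequality with $\ird|u_n|^4\,dx$ assumed bounded yields $E_n\le C\ell_n^{3-p}$ and then $\tfrac{\o}{2}\ell_n^2\le C+C\ell_n^{3-p}$ with $3-p<2$, again impossible; so $\ird|u_n|^4\,dx\to+\infty$ as well, a fact I keep for the last step.

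Claim (ii), which says that a fixed proportion of the one–dimensional energy lives at the scale $\|u_n\|^2$, is the heart of the matter. The outer radius is harmless: applying (i) with $\e=\delta^{-1}$ shows that $\{r\ge\delta^{-1}\|u_n\|^2\}$ carries at most $\delta/2\pi$ of the energy, negligible as $\delta\to0$. The genuine difficulty is to forbid concentration at radii $\ll\|u_n\|^2$, and here the vortex term is decisive. By (iii) the accumulated mass $h_{u_n}(r)=\tfrac12\int_0^r s\,u_n^2\,ds$ satisfies $h_{u_n}(+\infty)=\ell_n^2/(4\pi)\to+\infty$; consequently, beyond the median radius $\rho_n$ (defined by $h_{u_n}(\rho_n)=h_{u_n}(+\infty)/2$) one has $(h_{u_n}-N)^2\ge c\,\ell_n^4$, and Cauchy–Schwarz against the weighted mass gives
\[
D_n=2\pi\int_0^{+\infty}\frac{u_n^2}{r}(h_{u_n}-N)^2\,dr\ \ge\ c\,\ell_n^2\Big(\int_{\rho_n}^{+\infty}u_n^2\,dr\Big)^2 .
\]
Since $D_n$ is controlled by the energy bound, this severely constrains the unweighted tail mass $\int_{\rho_n}^{+\infty}u_n^2\,dr$; a companion lower estimate near the origin prevents $h_{u_n}$ from saturating at radii $\ll\|u_n\|^2$. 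Together these confine the bulk of the mass — and, through the weight $r$, of the energy — to radii comparable to $\|u_n\|^2$, which is exactly (ii).

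I expect the quantitative version of this last paragraph to be the main obstacle, and it is precisely where the analysis of \cite{AD} must be re-elaborated. In the vortex case the nonlocal energy splits as $\tfrac12 D_n=K(u_n)+\tfrac{N^2}{2}\ird\frac{u_n^2}{|x|^2}\,dx$, so the estimates must simultaneously handle the genuinely cubic part $K$ and the new, sign–indefinite cross term $-N\ird\frac{u_n^2}{|x|^2}h_{u_n}\,dx$; these are the extra contributions whose asymptotics also produce the constant $-C$ in Lemma \ref{le:asin}. I anticipate that the careful bookkeeping of these $N$–dependent terms, rather than any new conceptual ingredient, will be the technical crux.
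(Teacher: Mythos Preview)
Your arguments for (i) and (iii) are correct and, in fact, slicker than the paper's route. The weight trick for (i) is immediate and needs nothing else; the paper reaches (i) only \emph{after} building a scale $\rho_n$ and showing $\|u_n\|^2\sim\rho_n$. For (iii) the paper argues via a level-set decomposition (below) rather than interpolation, but your H\"older/\eqref{ineq}/sublinearity argument is valid and self-contained.

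The genuine gap is (ii), and it is not just bookkeeping. Your sketch rests on two claims you do not prove: that ``$D_n$ is controlled by the energy bound'' and an unspecified ``companion lower estimate near the origin''. In fact $D_n$ is \emph{not} bounded: applying Young to $\ell_n^{3-p}(A_n+D_n)^{(p-1)/2}$ in your own inequalities gives only $A_n+D_n\le C\ell_n^2$, so your estimate $D_n\ge c\,\ell_n^{2}\big(\int_{\rho_n}^{\infty}u_n^2\,dr\big)^2$ yields merely $\int_{\rho_n}^{\infty}u_n^2\,dr\le C$. That carries no information until you locate your median radius $\rho_n$ at the scale $\|u_n\|^2$, which is exactly what you leave open. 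Nothing in the outline forces $\rho_n\gtrsim\|u_n\|^2$, nor produces a \emph{lower} bound on the unweighted energy in any specific annulus; controlling the far tail via (i) only reduces (ii) to such a lower bound, it does not supply one.

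The paper's mechanism for (ii) is different in kind. It uses \eqref{ineq} together with the arithmetic--geometric inequality \emph{pointwise}: setting $f(t)=\tfrac{\o}{4}t^2+\tfrac18 t^4-\tfrac{1}{p+1}t^{p+1}$, one gets
\[
I_\o(u_n)\ \ge\ \tfrac{\pi}{2}\!\int_0^{+\infty}\!(|u_n'|^2+\o u_n^2)\,r\,dr
\;+\;\tfrac{\pi}{8}\!\int_0^{+\infty}\!\tfrac{u_n^2}{r}\Big(\!\int_0^r s\,u_n^2\,ds-2N\Big)^{\!2}dr
\;-\;c_0\,|A_n|,
\]
with $A_n=\{x:\alpha<u_n(x)<\beta\}$ the set where $f<0$. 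Hence $|A_n|\to\infty$, and Strauss gives $\rho_n:=\sup\{|x|:x\in A_n\}\le C\|u_n\|^2$. Splitting $A_n$ radially in half and using $u_n\ge\alpha$ there, the nonlocal term is bounded below by $c\,|A_n|^3/\rho_n^2$; balancing this and the gradient term against $-c_0|A_n|$ forces $|A_n|\sim\rho_n\sim\|u_n\|^2$. One then has an annulus $A_n\setminus B_n\subset\{\gamma_n\le|x|\le\rho_n\}$ of measure $\sim\|u_n\|^2$ on which $u_n\ge\alpha$, giving the desired lower bound; a repetition of the splitting shows $\gamma_n\sim\rho_n$, placing the annulus in $[\delta\|u_n\|^2,\delta^{-1}\|u_n\|^2]$. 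The vortex enters only through the shift $-2N$ in the nonlocal integrand, negligible once $|A_n|\to\infty$; so the crux of (ii) is not the $N$-dependent cross terms you anticipate, but this level-set geometry coming from $f$.
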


\begin{proof}
The proof is quite similar to \cite[Proposition 4.2]{AD}, but
there are some differences at certain points due to
the presence of the singular term. For convenience of the reader,
we reproduce it entirely here. By inequality \eqref{ineq} and
Cauchy-Schwartz inequality, we can estimate:
\begin{align}
I_\o(u) &\ge \frac{\pi}{2}\int_0^{+\infty} \left(|u'|^2 +\o u^2
\right)r \, dr +\frac{\pi}{8}\int_0^{+\infty}
\frac{u^2(r)}{r}\left(\int_0^r s u^2(s) \, ds-2N\right)^2 dr
\nonumber
\\
&\quad+ 2\pi \int_0^{+\infty}\left(\frac{\o}{4}u^2
+\frac{1}{8}u^4-\frac{1}{p+1}  |u|^{p+1}\right) r\, dr.
\label{eq:If}
\end{align}
Define
\[
f:\R_+\to \R,\qquad f(t)= \frac{\o}{4}t^2 +\frac{1}{8}t^4-\frac{1}{p+1} t^{p+1}.
\]
Then, the set $\{t > 0: f(t) < 0\}$ is of the form $(\a,\b)$,
where $\a,\b$ are positive constants depending only on $p, \o$.
Moreover, we denote by $-c_0=\min f<0$.

For each function $u_n$, we define:
 \begin{equation*} 
 A_n=\{x\in \RD : u_n(x)\in (\a,\b)\},\ \rho_n= \sup\{|x|: x\in A_n\}.
 \end{equation*}
\\
With these definitions, we can rewrite \eqref{eq:If} in the form
\begin{equation} \label{eq:s0} I_\o(u_n) \ge
\frac{\pi}{2} \int_0^{+\infty} \left(|u_n'|^2 +\o u_n^2 \right)r
\, dr +\frac{\pi}{8}\int_0^{+\infty}
\frac{u_n^2(r)}{r}\left(\int_0^r s u_n^2(s) \, ds-2N\right)^2 dr
-c_0|A_n|.
\end{equation}
In particular this implies that $|A_n|$ must diverge, and hence
$\rho_n$. This already proves (iii).

\medskip By Strauss Lemma \cite{strauss}, we have
\beq\label{eq:sl} \a\le u_n(\rho_n)\le
\frac{\|u_n\|}{\sqrt{\rho_n}}, \ \Rightarrow \|u_n\|^2\ge
\a^2\rho_n. \eeq

We now estimate the nonlocal term. For that, define
\begin{equation} \label{Bn} B_n=A_n\cap B(0,\g_n), \mbox{ for }
\g_n\in (0,\rho_n) \mbox{ such that }|B_n|=\frac 1 2|A_n|.
\end{equation}
Then $\int_{B_n}  u_n^2(x) \, dx\ge\a^2|B_n|$ diverges, indeed
$$\int_{B_n}  u_n^2(x) \, dx -2N>c|A_n|.$$

We now estimate:
\begin{align}
\int_0^{+\infty}  \frac{u_n^2(r)}{r}\left(\int_0^r s u_n^2(s) \,
ds-2N\right)^2 dr
 & \ge\int_{\g_n}^{+\infty}  \frac{u_n^2(r)}{r}\left(\int_0^r s u_n^2(s) \,
ds-2N\right)^2 dr \nonumber
\\
&\ge \frac{1}{4\pi^2} \int_{\g_n}^{+\infty}
\frac{u_n^2(r)}{r}\left(\int_{B_n}  u_n^2(x) \, dx-2N\right)^2 dr
\nonumber
\\
&\ge c|A_n|^2\int_{\g_n}^{+\infty} \frac{u_n^2(r)}{r}\, dr \nonumber
\\
&\ge c|A_n|^2\int_{A_n\setminus B_n} \frac{u_n^2(x)}{|x|^2}\, dx \nonumber
\\
&\ge c\frac{|A_n|^2}{\rho_n^2}\int_{A_n\setminus B_n} u_n^2(x)\, dx \nonumber
\\
&\ge c\frac{|A_n|^3}{\rho_n^2}.\label{eq:Arho}
\end{align}
Hence, by \eqref{eq:If}, \eqref{eq:sl} and \eqref{eq:Arho}, we get
\[
I_\o(u_n)
\ge c \rho_n +c\frac{|A_n|^3}{\rho_n^2}-c_0|A_n|
=  \rho_n\left(c +c\frac{|A_n|^3}{\rho_n^3}-c_0\frac{|A_n|}{\rho_n}\right).
\]
Observe that  $t\mapsto c+ct^3-c_0 t$ is strictly positive near
zero and goes to $+\infty$, as $t\to +\infty$. Then we can assume,
passing to a subsequence, that $|A_n| \sim \rho_n$. In other
words, there exists $m>0$ such that $ \rho_n|A_n|^{-1} \to m$ as
$n\to +\infty$.
\\
Taking into account \eqref{eq:s0} and \eqref{eq:sl}, we conclude that up to a
subsequence, $\|u_n\|^2 \sim \rho_n$. Moreover, for any fixed
$\e>0$, we have:
\[
C\rho_n \ge \|u_n\|_{L^2}^2  \ge \int_{\e \rho_n}^{+\infty}
u_n^2r \,dr \ge \e \rho_n \int_{\e \rho_n}^{+\infty} u_n^2 \,dr.
\]
An analogous estimate works also for $\int_{\e \rho_n}^{+\infty}
|u_n'|^2 dr$. This proves (i).
\\
\
\\
We now show that for some $\d>0$, $\|u_n\|_{H^1(\d \rho_n,\rho_n)}
\nrightarrow 0$, which implies assertion (ii).
\\
First, recall the definition of $B_n$ and $\g_n$ in \eqref{Bn}.
Then,
$$ \int_{\g_n}^{\rho_n} u_n^2(r) \, dr \ge \rho_n^{-1} \int_{\g_n}^{\rho_n} u_n^2(r) r\,
dr\ge \rho_n^{-1} \int_{A_n\setminus B_n} u_n^2(x) dx \ge
\rho_n^{-1} |A_n\setminus B_n| \a^2>c>0.$$
To conclude it suffices to show that $\g_n \sim \rho_n$. Define
\begin{equation*} 
C_n=B_n\cap B(0,\tau_n), \mbox{ for }
\tau_n\in (0,\gamma_n) \mbox{ such that }|C_n|=\frac 1 2|B_n|.
\end{equation*}

We can repeat the estimate \eqref{eq:Arho} with $A_n$, $B_n$  replaced with
$B_n$, $C_n$ respectively, to obtain that
\[\int_0^{+\infty}  \frac{u_n^2(r)}{r}\left(\int_0^r s u_n^2(s) \,
ds-2N\right)^2 dr \ge c\frac{|B_n|^3}{\gamma_n^2}.\]
Hence,
\[
I_\o(u_n)
\ge c \rho_n +c\frac{|A_n|^3}{\g_n^2}-c_0|A_n|
=  \g_n\left(c \frac{\rho_n}{\g_n} +c\frac{|A_n|^3}{\g_n^3}-c_0\frac{|A_n|}{\g_n}\right).
\]
And we are done since $I_{\o}(u_n)$ is bounded from above.

\end{proof}

\begin{proof}[Proof of Theorem \ref{teo1}]

If $\o\in (0,\o_0)$, then $J_\o(w_{k_2})<0$ (see Proposition
\ref{explicit}): applying Lemma \ref{le:asin} with $U= w_{k_2}$
we conclude assertion (i).

\medskip
We now prove $(ii)$ and $(iii)$.
We denote by $H^1_{0,r}(B(0,R))$ the Sobolev space of radial functions with zero
boundary value and
$$
\CH(B(0,R))=\left\{u\in H^1_{0,r}(B(0,R)):\int_{B(0,R)}\frac{u^2(x)}{|x|^2}dx<+\infty\right\},
$$
endowed by the norm $\|\cdot\|_\CH$.
\\
Fixed $n \in \N$ and given a sequence $v_i\in \CH(B(0,n))$ unbounded
with respect to the norm $\|\cdot\|$, \eqref{eq:s0} implies that $I_{\o}(v_i) \to + \infty$. By Lemma \ref{lemmino}, we conclude that $I_{\o}|_{\CH(B(0,n))}$ is
coercive.
\\
So, there exists $u_n$ a minimizer for $I_{\o}|_{\CH(B(0,n))}$. By
taking absolute value, we can assume that $u_n\ge 0$. Moreover,
$$I_{\o}(u_n) \to \inf I_{\o}, \mbox{ as } n \to +\infty.$$

In the following, $u_n$ may be extended as functions in $\CH$ by setting $u_n(x) = 0$ for $x \in \RD \setminus B(0, n)$.
If $u_n$ is bounded in $H^1(\R^2)$, Lemma \ref{lemmino} implies
that $u_n$ is bounded in $\CH$ and then $I_{\o}(u_n)$ is bounded. In such case we conclude that $\inf I_\o$ is finite. In what follows we assume that $u_n$
is an unbounded sequence in $H^1(\R^2)$, and we shall show that
$I_{\o}(u_n)$ is still bounded for $\o \ge \o_0$.

Our sequence $u_n$ satisfies the hypotheses of Proposition
\ref{prop-fund}, so let $\d>0$ be given by that proposition.

The proof will be divided in several steps.

\

{\bf Step 1:}

$\dis \int_{\frac{\delta}{2}
\|u_n\|^2}^{\frac{2}{\delta} \|u_n\|^2} |u_n|^{p+1}\, dr
\nrightarrow 0$.\\

By Proposition \ref{prop-fund}, i), we have that:

\[ \sum_{k=1}^{[\frac{\delta}{2} \|u_n\|^2]} \int_{\frac{\delta}{2}\|u_n\|^2+k-1}^{\frac{\delta}{2}\|u_n\|^2+k} \left(|u_n'|^2+ u_n^2 \right) dr \le \int_{\frac{\delta}{2}\|u_n\|^2}^{\delta\|u_n\|^2} \left(|u_n'|^2+ u_n^2 \right) dr \le C.\]

Taking the smaller summand in the left hand side we find $x_n$,

$$ \frac{\d}{2} \|u_n\|^2 \le x_n \le \d \|u_n\|^2-1 \ \mbox{ such that }
 \|u_n\|_{H^1(x_n,x_n+1)}^2 \le \frac{C}{\|u_n\|^2}.$$
Reasoning in an analogous way, we can choose $y_n$, $$\d^{-1}
\|u_n\|^2+1 \le y_n \le 2\d^{-1} \|u_n\|^2 \ \mbox{ such that }
\|u_n\|_{H^1(y_n,y_n+1)}^2 \le \frac{C}{\|u_n\|^2}.$$ Observe that
if $\d^{-1} \|u_n\|^2 \ge n$, the choice of $y_n$ can be
arbitrary, but it is unnecessary. Take $\phi_n:[0,+\infty] \to
[0,1]$ be a $C^{\infty}$-function such that
\[
\phi_n(r)=\left\{
\begin{array}{ll}
0, & \hbox{if }r\le x_n,
\\
1, & \hbox{if }  x_n+1\le r \le y_n,
\\
0, & \hbox{if } r\ge y_n+1.
\end{array} \ \ |\phi_n'(r)|\le 2.
\right.
\]
Let $$F(u)=\int_0^{+\infty}\frac{u^2(r)}{r}\left(\int_0^r
su^2(s)ds-2N\right)^2dr.$$ By the choice of $x_n, y_n$ and
Proposition \ref{prop-fund}, i), we have
\begin{align*}
F'(u_n)[\phi_n u_n]
&\ge4\int_0^{+\infty}\frac{u_n^2(r)}{r}\left(\int_0^r su_n^2(s)ds-2N\right)\left(\int_0^r su_n^2(s)\phi_n(s)ds\right)dr\nonumber\\
&\ge-8N\left(\int_{x_n}^{+\infty}u_n^2(r)dr\right)^2>-C.
\end{align*}
It follows that
\begin{align*}
0&=I_\o'(u_n)[\phi_n u_n] \ge 2\pi \int_{x_n}^{y_n} \left(|u'_n|^2 +\o
u_n^2\right)r \, dr
 - 2\pi \int_{x_n}^{y_n} |u_n|^{p+1}r \, dr +O(1)
\\
& \ge \|u_n\|^2 \left ( \frac{\d}{2} \int_{x_n}^{y_n} \left(|u'_n|^2 +\o u_n^2\right)  dr
 - \frac{2}{\d} \int_{x_n}^{y_n} |u_n|^{p+1} \, dr \right)
+O(1).
\end{align*}
This, together with the fact that $\|u_n\|_{H^1(x_n, y_n)}$ does
not tend to zero, allows us to conclude the proof of Step 1.

\

{\bf Step 2:} Exponential decay.

\

At this point we can apply the concentration-compactness principle
(see \cite[Lemma 1.1]{lions}); there exists $\s>0$ such that
\[
\sup_{\xi\in [x_n,\ y_n]}\int_{\xi-1}^{\xi+1}u_n^2\, dr\ge 2\s>0.
\]
Let us define:

\beq \label{def-D_n} D_n=\left\{\xi >0:
\int_{\xi-1}^{\xi+1} \left(|u_n'|^2+ u_n^2\right) dr\ge \s\right\} \neq
\emptyset, \ \hbox{and} \ \xi_n=\max D_n \in [x_n, n+1). \eeq
Let us observe that $\xi_n\sim \|u_n\|^2$; indeed $\xi_n \ge x_n \ge c\|u_n\|^2$ and, moreover,
\[
\|u_n\|^2\ge c\int_{\xi_n-1}^{\xi_n+1}\left(|u_n'|^2+ u_n^2\right) r\  dr \ge c (\xi_n-1) \int_{\xi_n-1}^{\xi_n+1}\left(|u_n'|^2+ u_n^2\right) \,  dr
\ge  c (\xi_n-1).
\]

By definition, $\int_{\zeta-1}^{\zeta+1}(|u_n'|^2+u_n^2)\, dr <
\s$ for all $\zeta > \xi_n$. By embedding of
$H^1(\zeta-1,\zeta+1)$ in $L^{\infty}$, $0 \le u_n(\zeta) < C \sqrt{\s}
$ for any $\zeta > \xi_n$. From this we will get exponential decay of $u_n$. Indeed, $u_n$ is a solution of
\[ 
 - u_n''(r) - \frac{u_n'(r)}{r} + \o u_n(r) + f_n(r) u_n(r)  = u_n^p(r),
 \]
with
$$  f_n(r)=  \frac{(h_n(r)-N)^2}{r^2} + \int_r^n \frac{h_n(s)-N}{s} u_n^2(s) \, ds, \ \ \  h_n(r)= \frac 12\int_0^r s u_n^2(s)  \, ds.$$
If $r>\d
\|u_n\|^2$, again by Proposition \ref{prop-fund}, i), we see that $\int_r^n \frac{N}{s} u_n^2(s) \, ds=o(1)$. Then, by taking smaller $\s$, if necessary, we can
conclude that there exists $C>0$ such that
$$ |u_n(r)| <C {\rm exp}\left(- \sqrt{\frac{\o}{2}} (r-\xi_n)\right), \quad
\mbox{ for all } r>\xi_n. $$

The local $C^1$ regularity theory for the Laplace operator (see
\cite[Section 3.4]{gilbarg}) implies a similar estimate for
$u_n'(r)$. In other words,
\[ 
|u_n(r)| + |u_n'(r)|<C {\rm exp}\left(- \sqrt{\frac{\o}{2}}
(r-\xi_n)\right), \quad \mbox{ for all } r>\xi_n.
\]

\

{\bf Step 3:} Splitting of $I_\o(u_n)$.

Reasoning as in the beginning of Step 1, we can take $z_n$:
 $$ \xi_n - 3 \|u_n\| \le z_n \le \xi_n-2\|u_n\| \ \mbox{ with } \|u_n\|^2_{H^1(z_n, z_n+1)} \le \frac{C}{\|u_n\|}. $$

Define $\psi_n:[0,+\infty] \to [0,1]$ be a smooth function such that
\[ 
\psi_n(r)=\left\{
\begin{array}{ll}
0, & \hbox{if }r\le z_n,
\\
1, & \hbox{if }  r \ge z_n+1,
\end{array}  \ \ |\psi_n'(r)|\le 2.
\right.
\]

We claim that \beq\label{eq:ICR} I_\o(u_n)\ge I_\o(u_n
\psi_n)+I_\o\left(u_n(1-\psi_n)\right) +c \|u_n
(1-\psi_n)\|_{L^2(\RD)}^2+ O(\|u_n\|). \eeq

This estimate has been accomplished in \cite{AD} for $N=0$.
Therefore we just need to estimate the two new terms; it is easy
to get that
%
%

\begin{align*}
\int_0^n \frac{u_n^2(r)}{r} dr=\int_0^n
\frac{u_n^2(r)\psi^2_n(r)}{r} dr+\int_0^n
\frac{u_n^2(r)(1-\psi_n(r))^2}{r} dr+o(1).
\end{align*}

Moreover,
\begin{align*}
\int_0^n &\frac{u_n^2(r)}{r} \left(\int_0^r s u_n^2(s)\, ds\right)dr=\int_0^n \frac{u_n^2(r)\psi^2_n(r)}{r} \left(\int_0^r s\psi_n^2(s) u_n^2(s)\, ds\right)dr\\
&\qquad+\int_0^n \frac{u_n^2(r)(1-\psi_n(r))^2}{r} \left(\int_0^r s(1-\psi_n(s))^2 u_n^2(s)\, ds\right) dr\\
&\qquad+\underset{(I)}{\underbrace{\int_0^n \frac{u_n^2(r)\psi_n^2(r)}{r} \left(\int_0^r su_n^2(s)(1-\psi^2_n(s))\,ds\right)dr}}\\
&\qquad {+ \underset{(II)}{\underbrace{2\int_0^n \frac{u_n^2(r)\psi_n^2(r)}{r} \left(\int_0^rs u_n^2(s)\psi_n(s)(1-\psi_n(s))\,ds\right)dr}}}\\
&\qquad{+ \underset{(III)}{\underbrace{\int_0^n \frac{u_n^2(r)(1-\psi_n(r))^2}{r} \left(\int_0^rs u_n^2(s)\psi_n^2(s)\,ds\right)dr}}}\\
&\qquad+\underset{(IV)}{\underbrace{2\int_0^n \frac{u_n^2(r)(1-\psi_n(r))^2}{r} \left(\int_0^rs u_n^2(s)\psi_n(s)(1-\psi_n(s))\,ds\right)dr}}\\
&\qquad +\underset{(V)}{\underbrace{2\int_0^n \frac{u_n^2(r)(1-\psi_n(r))\psi_n(r)}{r}\left(\int_0^r s u_n^2(s)\,ds\right) dr}}.\\
\end{align*}

We now observe that (I) $\dots$ (V) are bounded, as follows:
\begin{align*}
(I)& \le  \int_{z_n}^{n} \frac{u_n^2(r)}{r}\left(\int_0^{z_n+1} s
u_n^2(s)\,ds\right) dr \le \frac{C\|u_n\|^2}{z_n}=O(1),
\\ (II) &  \le 2\int_{z_n}^{n}
 \frac{u_n^2(r)}{r}\left(\int_{z_n}^{z_n+1} s u_n^2(s)\,ds\right) dr=O(1),
\end{align*}
and the other terms can be estimated similarly. Therefore, we
conclude the proof of \eqref{eq:ICR}.

\

{\bf Step 4:} The following estimate holds:

\beq \label{comparison} I_\o(u_n \psi_n) = 2\pi \xi_n J_\o(u_n
\psi_n) +O(\|u_n\|). \eeq

\

In \cite{AD} this estimate was made for $N=0$. So we just need to check the new nonlocal terms

\begin{align*}
\int_0^n  \frac{(u_n \psi_n)^2(r)}{r} dr&\le \frac{C}{z_n}=o(1),
\\
\int_0^n  \frac{(u_n \psi_n)^2(r)}{r}\left(\int_0^r s (u_n
\psi_n)^2(s) \, ds\right) dr&\le  \int_{z_n}^n
\frac{u_n^2(r)}{r}\left(\int_{z_n}^r s u_n^2(s)\, ds\right)dr \\ &\le
\left(\int_{z_n}^n  u_n^2(r)\, dr\right)^2 =O(1).
\end{align*}

{\bf Step 5:} Conclusion for $\o > \o_0$.

\

By \eqref{eq:ICR} and \eqref{comparison}, we have
\beq\label{eq:ICR2} I_\o(u_n)\ge 2 \pi \xi_n J_\o(u_n
\psi_n)+I_\o(u_n(1-\psi_n)) +c \|u_n (1-\psi_n)\|_{L^2(\RD)}^2+
O(\|u_n\|). \eeq

Recall that $\|u_n \psi_n\|^2_{H^1(\R)} \ge \s>0$. By Proposition
\ref{extra}, we have that $J_\o(u_n \psi_n) \to c>0$, up to a
subsequence. Since $\xi_n \sim \|u_n\|^2$, it turns out from
\eqref{eq:ICR2} that $I_\o(u_n) > I_\o(u_n(1-\psi_n))$, which is a
contradiction with the definition of $u_n$. Therefore, $u_n$ needs
to be a bounded sequence and, in particular, $\inf I_\o>-\infty$.

 Let us now show that $I_{\o}$ is coercive. Indeed,
take $u_n \in \CH$ an unbounded sequence, and assume that
$I_{\o}(u_n)$ is bounded from above. By Lemma \ref{lemmino},
$\|u_n\|$ is unbounded, so that Proposition \ref{prop-fund},
(iii), shows us that $I_{\hat{\o}}(u_n) \to -\infty$ for any
$\o_0<\hat{\o} < \o$, a contradiction.

\

{\bf Step 6:} Conclusion for $\o = \o_0$.

\

As above, \eqref{eq:ICR2} gives a contradiction unless $J_\o(u_n
\psi_n) \rightarrow 0$.
Proposition \ref{extra} now implies that
$\psi_n u_n (\cdot - t_n)\to w_{k_2}$ up to a subsequence, for some
$t_n \in (0,+\infty)$. Since $\xi_n \in D_n$ (recall its definition
in \eqref{def-D_n}), we have that $|t_n-\xi_n|$ is bounded. With
this extra information, we have a better estimate of the decay of
the solutions: indeed,

\beq \label{decay2} |u_n(r)| + |u_n'(r)|<C {\rm exp}\left(-
\sqrt{\frac{{\o}}{2}} |r-\xi_n|\right),\quad \mbox{ for all }
r>\xi_n-2\|u_n\|. \eeq

This allows us to do the cut-off procedure in a much more accurate
way. Indeed, take $\tilde{z}_n= \xi_n-\|u_n\|$.  Then,
\eqref{decay2} implies that

\beq \label{new} \|u_n\|^2_{H^1(\tilde{z}_n, \tilde{z}_n+1)} \le C
{\rm exp}(- \sqrt{\frac{{\o}}{2}} \|u_n\|). \eeq

Define $\tilde{\psi}_n:[0,+\infty] \to [0,1]$ accordingly:
\[
\tilde{\psi}_n(r)=\left\{
\begin{array}{ll}
0, & \hbox{if }r\le \tilde{z}_n,
\\
1, & \hbox{if }  r \ge \tilde{z}_n+1,
\end{array} \ \ |\tilde{\psi}_n'(r)|\le 2.
\right.
\]

The advantage is that, in the estimate of $I_\o(u_n)$, now the
errors are exponentially small. Indeed, by repeating the estimates
of Step 3 with the new information \eqref{new}, we obtain:
\[
 I_\o(u_n)\ge I_\o(u_n
\tilde\psi_n)+I_\o(u_n(1-\tilde\psi_n)) +c \|u_n (1-\tilde\psi_n)\|_{L^2(\RD)}^2+
O(1),
\]

Then,
\begin{align*}
I_\o(u_n) & \ge I_\o(u_n \tilde{\psi}_n)+I_\o\big(u_n(1-\tilde{\psi}_n)\big)
+c \|u_n (1-\tilde{\psi}_n)\|_{L^2(\RD)}^2+ O(1)
\\
& = 2 \pi\xi_n J_\o(u_n
\tilde{\psi}_n)+I_\o\big(u_n(1-\tilde{\psi}_n)\big) +c \|u_n
(1-\tilde{\psi}_n)\|_{L^2(\RD)}^2+ O(1)
\\
&\ge  I_{(\o+2c)} \big(u_n(1-\tilde{\psi}_n)\big)+ O(1).
\end{align*}
But, by Step 5, we already know that $I_{(\o+2c)}$ is bounded from below, and hence $\inf I_{\o_0} > -\infty$.

Finally, by applying Lemma \ref{le:asin} to $U=w_{k_2}$ we readily get
that $I_{\o_0}$ is not coercive.

\end{proof}

\begin{proof}[Proof of Theorem \ref{teo2}] We shall prove each assessment separately.

{\bf Proof of (ii).} First, we observe that since $\inf
I_{\o_0}<0$, there exists $\tilde{\o}>\o_0$ such that $\inf I_\o<0$
if and only if $\o\in (\o_0,\tilde{\o})$. Since, by Theorem
\ref{teo1} and Proposition \ref{prop:weak}, $I_\o$ is coercive and
weakly lower semicontinuous, we infer that the infimum is
attained at a negative value. This gives the first solution $u_1$.
\\
Clearly, $0$ is a local minimum for $I_\o$, and $I_\o(u_1)<0$. Then, the functional satisfies the geometrical
assumptions of the Mountain Pass Theorem, see \cite{ar}. Since
$I_\o$ is coercive, (PS) sequences are bounded. By the compact
embedding of $\Hr$ into $L^{p+1}(\R^2)$ and Proposition
\ref{prop:weak}, standard arguments show that $I_\o$ satisfies the
Palais-Smale condition and so we find a second solution which is
at a positive energy level.

\
\\
{\bf Proof of (iii).} Let now consider $\o\in (0,\o_0)$.
Performing the rescaling $u\mapsto u_\o=\sqrt{\o}\ u(\sqrt{\o}\ \cdot)$,
we get
\begin{align*}
I_\o(u_\o) & =  \o \left[ \frac 12 \int_{\R^2} \left(|\nabla u|^2
+  u^2\right) dx + \frac{1}{8} \int_{\R^2}
\frac{u^2(x)}{|x|^2}\left(\int_0^{|x|} s u^2(s) \, ds-2N\right)^2
dx \right.
\\
 & \left.\qquad  -
\frac{\o^\frac{p-3}{2}}{p+1}  \int_{\R^2} |u|^{p+1} \, dx\right].
\end{align*}
Define $\l=\o^\frac{p-3}{2}$ and $\mathcal{I}_\l:H(\R^2)\to \R$ as
\begin{align*}  \mathcal{I}_\l(u)= \Phi(u) -
\frac{\l}{p+1}  \int_{\R^2} |u|^{p+1} \, dx, \end{align*}
 with
\begin{align*}
\Phi(u)  =  \frac 12 \| u\|^2 + \frac{1}{2} \int_{\R^2}
\frac{u^2(x)}{|x|^2} (h_u(|x|)-N)^2
dx \\
= \frac 12 \| u\|^2 + K(u) +\frac{N^2}{2} \int_{\R^2}
\frac{u^2(x)}{|x|^2}\,
dx, \end{align*}
where $K$ is as defined in \eqref{K}. Then $\I_\l$ satisfies the geometrical assumptions of the Mountain
Pass Theorem. The main problem here is that we do not know whether a (PS) sequence could be unbounded.

By Lemma \ref{lemmino}, the functional $\Phi : \CH \to \R$ is coercive. Then we can use \cite[Theorem 1.1]{jeanjean} to obtain a bounded
Palais-Smale sequence $u_n \in \CH$ for almost every $\l$. Passing to a subsequence, we can assume that $u_n \weakto u$; Proposition
\ref{prop:weak} and standard arguments imply that $u$ is a
critical point of ${\mathcal I}_\l$. Making the change of
variables back we obtain a solution of \eqref{equation} for almost
every $\o\in (0,\o_0)$.

\
\\
Finally, in order to find positive solutions of \eqref{equation},
we simply observe that the above arguments apply to the functional
$I_\o^+:\CH \to \R$
\begin{align*}
I_\o^+(u) & =  \frac 12 \int_{\R^2} \left(|\nabla u|^2 + \o
u^2\right) dx + \frac{1}{8} \int_{\R^2}
\frac{u^2(x)}{|x|^2}\left(\int_0^{|x|} s u^2(s) \, ds-2N\right)^2
dx
\\
 & \quad  -
\frac{1}{p+1}  \int_{\R^2} (u^+)^{p+1} \, dx.
\end{align*}
Due to the maximum principle, the critical points of $I_\o^+$ are
positive solutions of \eqref{equation}.

{\bf Proof of (i).} This part happens to be quite delicate, compared to the case $N=0$ studied in \cite{AD}. Let $u$ be a
solution of \eqref{equation}. If we multiply \eqref{equation} by $u$
and integrate, we get

\begin{align}\label{eq:j1}
0&= \ird \left(|\n u|^2+\o u^2\right) dx
+3\ird \frac{u^2(x)}{|x|^2}\left(h_u(|x|) -N\right)^2 dx\nonumber\\
&\quad+2N\ird \frac{u^2(x)}{|x|^2}\left(h_u(|x|)-N \right) dx
- \ird |u|^{p+1} dx.
\end{align}

From \eqref{eq:j1} and the Pohozaev identity (Proposition
\ref{poho}), we obtain that, for any $l>0$,
\begin{align}\label{eq:non3}
0&= (l+1)\ird |\n u|^2dx+\o \ird u^2dx
+(l+3)\ird \frac{u^2(x)}{|x|^2}\left(h_u(|x|) -N\right)^2 dx\nonumber\\
&\quad+2N\ird \frac{u^2(x)}{|x|^2}\left(h_u(|x|)-N \right)  dx
-\left(\frac{p-1}{p+1}l+1\right)\ird |u|^{p+1} dx.
\end{align}
 By using \eqref{ineq} in \eqref{eq:non3},
\begin{align}\label{eq:non6}
0&\ge  \ird \left(|\n u|^2+\o u^2\right) dx
+ 3\ird \frac{u^2(x)}{|x|^2}\left(h_u(|x|) -N\right)^2 dx\nonumber\\
&\quad +2N\ird \frac{u^2(x)}{|x|^2}\left(h_u(|x|)-N \right) dx-
\left(\frac{p-1}{p+1}l+1\right)\ird |u|^{p+1} dx+\frac{l}{2}\ird
|u|^4dx.
\end{align}
We can estimate
\begin{align}\label{eq:non7}
 &3\ird \frac{u^2(x)}{|x|^2}\left(h_u(|x|) -N\right)^2 +2N\ird \frac{u^2(x)}{|x|^2}\left(h_u(|x|)-N \right) dx
\nonumber\\
&=\ird \frac{u^2(x)}{|x|^2}(h_u(|x|) -N)(3h_u(|x|)-N ) dx\nonumber\\
&\ge -\frac{N^2}{3}\int_{\{N/3\le h_u\le N\}}\frac{u^2(x)}{|x|^2}dx,
\end{align}
where $\{N/3\le h_u\le N \}=\{r_0\le|x|\le r_1\}$ with
$h_u(r_0)=N/3$ and $h_u(r_1)=N$ (here we have used that $h_u$ is
increasing in $r$). For any $r>0$, by the definition of $h_u$  we
have
\[ 
4\pi h_u(r)=\int_{B_r}u^2(x)dx\le C r\left
(\int_{B_r}u^4(x)dx\right )^{1/2}.
\]
Then
\begin{align}\label{eq:non5}
\int_{B_r} h_u^2(|x|)\frac{u^2(x)}{|x|^2}dx\le C\int_{B_r}u^2(x)\left(\int_{B_{|x|}}u^4(y)dy\right)dx
\\ \le C\int_{B_r}u^2(x)dx\int_{B_r}u^4(x)dx\nonumber \le C h_u(r)\int_{B_r}u^4(x)dx.
\end{align}
We now apply \eqref{eq:non5} to estimate
\begin{align}\label{eq:non8}
\int_{\{N/3\le h_u\le N\}}\frac{u^2(x)}{|x|^2}dx&\le C \int_{\{N/3\le
h_u\le N\}}h_u^2(|x|)\frac{u^2(x)}{|x|^2}dx
 \le C \int_{B_{r_1}}h^2_u(|x|)\frac{u^2(x)}{|x|^2}\, dx\nonumber\\
 &\le Ch_u(r_1) \int_{B_{r_1}}u^4(x)dx\le C\ird u^4dx.
\end{align}
We apply \eqref{eq:non7} and \eqref{eq:non8} in \eqref{eq:non6}:
\[
0\ge \ird |\n u|^2dx+\o \ird u^2dx-c\ird u^4dx+\frac l2\ird u^4
-\left(\frac{p-1}{p+1}l+1\right)\ird |u|^{p+1} dx.
\]
Therefore it suffices to take $l$ so that $-c+\frac l 2=1$, and
then to take $\o$ so that the function
\begin{align*}
 s\rightarrow  \o s^2+s^4-\left(\frac{p-1}{p+1}l+1\right)|s|^{p+1}
\end{align*}
is non-negative for any $s$. Therefore $u$ must be identically
equal to zero.

\end{proof}

\end{document}